\begin{document}

\newtheorem{theorem}[subsection]{Theorem}
\newtheorem{proposition}[subsection]{Proposition}
\newtheorem{lemma}[subsection]{Lemma}
\newtheorem{corollary}[subsection]{Corollary}
\newtheorem{conjecture}[subsection]{Conjecture}
\newtheorem{prop}[subsection]{Proposition}
\numberwithin{equation}{section}
\newcommand{\mr}{\ensuremath{\mathbb R}}
\newcommand{\mc}{\ensuremath{\mathbb C}}
\newcommand{\dif}{\mathrm{d}}
\newcommand{\intz}{\mathbb{Z}}
\newcommand{\ratq}{\mathbb{Q}}
\newcommand{\natn}{\mathbb{N}}
\newcommand{\comc}{\mathbb{C}}
\newcommand{\rear}{\mathbb{R}}
\newcommand{\prip}{\mathbb{P}}
\newcommand{\uph}{\mathbb{H}}
\newcommand{\fief}{\mathbb{F}}
\newcommand{\majorarc}{\mathfrak{M}}
\newcommand{\minorarc}{\mathfrak{m}}
\newcommand{\sings}{\mathfrak{S}}
\newcommand{\fA}{\ensuremath{\mathfrak A}}
\newcommand{\mn}{\ensuremath{\mathbb N}}
\newcommand{\mq}{\ensuremath{\mathbb Q}}
\newcommand{\half}{\tfrac{1}{2}}
\newcommand{\f}{f\times \chi}
\newcommand{\summ}{\mathop{{\sum}^{\star}}}
\newcommand{\chiq}{\chi \bmod q}
\newcommand{\chidb}{\chi \bmod db}
\newcommand{\chid}{\chi \bmod d}
\newcommand{\sym}{\text{sym}^2}
\newcommand{\hhalf}{\tfrac{1}{2}}
\newcommand{\sumstar}{\sideset{}{^*}\sum}
\newcommand{\sumprime}{\sideset{}{'}\sum}
\newcommand{\sumprimeprime}{\sideset{}{''}\sum}
\newcommand{\sumflat}{\sideset{}{^\flat}\sum}
\newcommand{\shortmod}{\ensuremath{\negthickspace \negthickspace \negthickspace \pmod}}
\newcommand{\V}{V\left(\frac{nm}{q^2}\right)}
\newcommand{\sumi}{\mathop{{\sum}^{\dagger}}}
\newcommand{\mz}{\ensuremath{\mathbb Z}}
\newcommand{\leg}[2]{\left(\frac{#1}{#2}\right)}
\newcommand{\muK}{\mu_{\omega}}
\newcommand{\thalf}{\tfrac12}
\newcommand{\lp}{\left(}
\newcommand{\rp}{\right)}
\newcommand{\Lam}{\Lambda_{[i]}}
\newcommand{\lam}{\lambda}
\def\L{\fracwithdelims}
\def\om{\omega}
\def\pbar{\overline{\psi}}
\def\phi{\varphi}
\def\lam{\lambda}
\def\lbar{\overline{\lambda}}
\newcommand\Sum{\Cal S}
\def\Lam{\Lambda}
\newcommand{\sumtt}{\underset{(d,2)=1}{{\sum}^*}}
\newcommand{\sumt}{\underset{(d,2)=1}{\sum \nolimits^{*}} \widetilde w\left( \frac dX \right) }

\theoremstyle{plain}
\newtheorem{conj}{Conjecture}
\newtheorem{remark}[subsection]{Remark}

\providecommand{\re}{\mathop{\rm Re}}
\providecommand{\im}{\mathop{\rm Im}}
\def\cI{\mathcal{I}}
\def\cL{\mathcal{L}}
\def\E{\mathbb{E}}

\makeatletter
\def\widebreve{\mathpalette\wide@breve}
\def\wide@breve#1#2{\sbox\z@{$#1#2$}%
     \mathop{\vbox{\m@th\ialign{##\crcr
\kern0.08em\brevefill#1{0.8\wd\z@}\crcr\noalign{\nointerlineskip}%
                    $\hss#1#2\hss$\crcr}}}\limits}
\def\brevefill#1#2{$\m@th\sbox\tw@{$#1($}%
  \hss\resizebox{#2}{\wd\tw@}{\rotatebox[origin=c]{90}{\upshape(}}\hss$}
\makeatletter

\title[Sharp lower bounds for moments of $\zeta'(\rho)$]{Sharp lower bounds for moments of $\zeta'(\rho)$}

%%\date{\today}
\author{Peng Gao}
\address{School of Mathematical Sciences, Beihang University, Beijing 100191, P. R. China}
\email{penggao@buaa.edu.cn}
\begin{abstract}
 We study the $2k$-th discrete moment of the derivative of the Riemann zeta-function at nontrivial zeros to establish sharp lower bounds for all real $k \geq 0$ under the Riemann hypothesis (RH).
\end{abstract}

\maketitle

\noindent {\bf Mathematics Subject Classification (2010)}: 11M06, 11M26 \newline

\noindent {\bf Keywords}: lower bounds, moments, nontrivial zeros,  Riemman zeta-function

\section{Introduction}
\label{sec 1}

  Various types of moments of the Riemann zeta function $\zeta(s)$ have been extensively studied in the literature. In this paper, we are interested in the $2k$-th discrete moment of the derivative of $\zeta(s)$ at nontrivial zeros denoted by
\begin{align*}
J_k(T) :=\frac{1}{N(T)}\sum_{0<\Im(\rho)\le T}|\zeta'(\rho)|^{2k},
\end{align*}
  where we write $\rho$ for the nontrivial zeros of $\zeta(s)$ and
\begin{align*}
 N(T) =\sum_{0<\Im(\rho)\le T}1.
\end{align*}

   In \cite{Gonek}, S. M. Gonek initiated the study on $J_k(T)$ to show that under the truth of the Riemann hypothesis (RH), one has asymptotically
\begin{align*}
J_1(T) \sim \frac 1{12}(\log T)^3.
\end{align*}

  Regarding the order of magnitude for $J_k(T)$, S. M. Gonek \cite{Gonek1} and D. Hejhal \cite{Hejhal} independently conjectured that for any real $k$,
\begin{align}
\label{Jksim}
J_k(T) \asymp (\log T)^{k(k+2)}.
\end{align}

   A precisely asymptotic formula is further conjectured by C. P. Hughes, J. P. Keating and N. O'Connell in \cite{HKO}, building on connections with the random matrix theory. The evidence from the random matrix side also suggests that \eqref{Jksim} may not be valid for $k \leq -3/2$.

  A proof of \eqref{Jksim} for the case of $k=2$ is given by N. Ng \cite{Ng} assuming RH. Under RH and the additional assumption that
the zeros of $\zeta(s)$ are simple, S. M. Gonek \cite{Gonek1} obtained a lower bound for $J_{-1}(T)$ of the conjectured order of magnitude. An explicit estimation for the constant involved is further given by M. B. Milinovich and N. Ng \cite{MN}. In \cite{MN1}, M. B. Milinovich and N. Ng also proved that for all natural number $k$,
\begin{align}
\label{Jlowerbound}
J_k(T) \gg_k (\log T)^{k(k+2)}.
\end{align}

  On the other hand, M. B. Milinovich \cite{Milinovich} proved that under RH, for any natural number $k$ and any $\varepsilon>0$,
\begin{align}
\label{Jupperbound}
J_k(T) \ll_{k, \varepsilon} (\log T)^{k(k+2)+\varepsilon}.
\end{align}

  In \cite{Kirila}, S. Kirila obtained sharp upper bounds for $J_k(T)$ for all real $k>0$ under RH. In particular, this implies the validity of \eqref{Jupperbound} for natural numbers $k$ without the extra $\varepsilon$ power. Together with \eqref{Jlowerbound}, we see that \eqref{Jksim} is valid for all natural numbers $k$.

 The approach taken by M. B. Milinovich and N. Ng to obtain their result concerning \eqref{Jlowerbound} follows from a simple and powerful method developed by Z. Rudnick and K. Soundararajan in \cites{R&Sound, R&Sound1} for establishing lower bounds for moments of families of $L$-functions, while the approach taken by S. Kirila follows from a method of K. Soundararajan \cite{Sound01} together with its refinement by A. J. Harper \cite{Harper} for establishing upper bounds for moments of families of $L$-functions.

   There are now a few more approaches towards establishing sharp bounds for moments of $L$-functions, notably the upper bounds principal of M. Radziwi{\l\l} and K. Soundararajan \cite{Radziwill&Sound} and the lower bounds principal of W. Heap and K. Soundararajan \cite{H&Sound}. One then expects to apply these approaches to obtain sharp bounds concerning $J_k(T)$. In fact, it is pointed out in \cite{Kirila} that one should be able to establish sharp lower bounds for all real $k>0$ using the approaches in \cite{Radziwill&Sound, Radziwill&Sound1}. The aim of this paper is to achieve this and our main result is as follows.
\begin{theorem}
\label{thmlowerboundJ}
   Assuming RH. For large $T$ and any $k \geq 0$, we have
\begin{align*}
%%\label{lowerboundf}
   J_k(T) \gg_k (\log T)^{k(k+2)}.
\end{align*}
\end{theorem}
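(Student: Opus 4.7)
The plan is to adapt the lower-bound principle of Heap--Soundararajan \cite{H&Sound} (which refines the Radziwi{\l\l}--Soundararajan approach \cite{Radziwill&Sound}) from its original setting of continuous moments of $\zeta(1/2+it)$ on the critical line to the discrete family $\{\zeta'(\rho)\}$ over the nontrivial zeros of $\zeta(s)$. The scheme is driven by H\"older's inequality: for $k\geq 1$ and any Dirichlet polynomial $\mathcal{N}(s)$,
\begin{align*}
\sum_{0<\Im(\rho)\le T}|\zeta'(\rho)|^{2k}
\;\geq\;
\frac{\bigl(\sum_{0<\Im(\rho)\le T}|\zeta'(\rho)|^{2}\,|\mathcal{N}(\rho)|^{2(k-1)}\bigr)^{k}}{\bigl(\sum_{0<\Im(\rho)\le T}|\mathcal{N}(\rho)|^{2k}\bigr)^{k-1}}.
\end{align*}
With a suitable $\mathcal{N}$ (playing the role of a short model of $\zeta(s)^{k-1}$) for which the numerator and denominator attain their heuristic sizes, this H\"older quotient delivers the desired lower bound $N(T)(\log T)^{k(k+2)}$. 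The range $0\leq k<1$ I would handle separately, either by a dual H\"older rearrangement in the spirit of \cite{H&Sound} for fractional moments below $1$, or by interpolation between the trivial case $k=0$ and Gonek's asymptotic at $k=1$.

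The polynomial $\mathcal{N}$ is built in the Heap--Sound way. Fix $\theta>0$ small, partition the primes in $[2,T^{\theta}]$ into short intervals $I_{1},\dots,I_{J}$ on a doubly-exponential log-log scale, and set $P_{j}(s)=\sum_{p\in I_{j}}p^{-s}$. Define
\begin{align*}
\mathcal{N}_{j}(s)=\sum_{r=0}^{R_{j}}\frac{\bigl((k-1)P_{j}(s)\bigr)^{r}}{r!},\qquad \mathcal{N}(s)=\prod_{j=1}^{J}\mathcal{N}_{j}(s),
\end{align*}
with the truncation levels $R_{j}$ chosen so that $\mathcal{N}_{j}(s)$ closely approximates $\exp((k-1)P_{j}(s))$ on the ``typical set'' $\mathcal{T}$ where each $|P_{j}(s)|$ is of moderate size, and so that $\mathcal{N}$ is a genuine Dirichlet polynomial of length $\leq T^{2\theta}$ with $2\theta<1/2$.

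The two main computations are: \emph{(a)} the denominator $\sum_{\rho}|\mathcal{N}(\rho)|^{2k}$, and \emph{(b)} the numerator $\sum_{\rho}|\zeta'(\rho)|^{2}|\mathcal{N}(\rho)|^{2(k-1)}$. Since the relevant non-integer powers $|\mathcal{N}|^{2k}$ and $|\mathcal{N}|^{2(k-1)}$ are not themselves Dirichlet polynomials, the Heap--Sound device is to bound them pointwise on $\mathcal{T}$ by squares $|\widetilde{\mathcal{N}}_{j,m}(s)|^{2}$ of auxiliary short truncated-exponential Dirichlet polynomials, while the contribution of zeros with $1/2+i\gamma\notin\mathcal{T}$ is controlled via high even moments of the individual $P_{j}(\rho)$. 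Each resulting pure second moment of a short Dirichlet polynomial at zeros in \emph{(a)} is then handled by the standard Landau--Gonek explicit formula, which converts $\sum_{0<\gamma\leq T}F(\rho)$ into a sum over prime powers; for \emph{(b)} one invokes the twisted second-moment formula $\sum_{\rho}|\zeta'(\rho)|^{2}|D(\rho)|^{2}$ derived by Ng \cite{Ng} in his proof of the $J_{2}(T)$ asymptotic, obtained by contour integration of $\zeta'(s)^{2}D(s)\widetilde D(s)/\zeta(s)$ around a rectangular contour and extraction of the pole contributions.

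The principal obstacle is to carry the good-set/bad-set splitting through uniformly in $k$ at the level of zeros rather than on the continuous $t$-axis: one needs sharp enough bounds on the number of $\rho$ for which some $|P_{j}(\rho)|$ is atypically large. These reduce to high even moments of the $P_{j}$'s evaluated at zeros, which are again short-Dirichlet-polynomial moments accessible to the explicit formula, provided that the $R_{j}$ are kept small enough that the lengths arising from the relevant convolutions do not exceed $T^{1-\varepsilon}$. Once these technical inputs are in place, H\"older's inequality combines \emph{(a)} and \emph{(b)} to yield $J_{k}(T)\gg_{k}(\log T)^{k(k+2)}$.
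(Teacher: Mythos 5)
Your opening H\"older inequality,
\begin{align*}
\sum_{0<\Im(\rho)\le T}|\zeta'(\rho)|^{2k}
\;\geq\;
\frac{\bigl(\sum_{0<\Im(\rho)\le T}|\zeta'(\rho)|^{2}\,|\mathcal{N}(\rho)|^{2(k-1)}\bigr)^{k}}{\bigl(\sum_{0<\Im(\rho)\le T}|\mathcal{N}(\rho)|^{2k}\bigr)^{k-1}},
\end{align*}
is the Rudnick--Soundararajan rearrangement rather than the Heap--Soundararajan one, and this is not a cosmetic distinction: it is precisely the step that blocks fractional $k$. The Heap--Soundararajan principle that the paper actually uses starts instead from a sum in which $\zeta'$ appears \emph{linearly}, namely $\sum_{\rho}\zeta'(\rho)\mathcal{N}(\rho,k-1)\mathcal{N}(1-\rho,k)$ (Lemma~\ref{lem1}), and then applies H\"older with exponent $1/(2k)$ on the $\zeta'$ factor (two-term H\"older for $k>1/2$, three-term for $0<k\le 1/2$). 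Keeping $\zeta'$ linear is the whole point: it turns the main evaluation (Proposition~\ref{Prop4}) into a twisted \emph{first} moment of $\zeta'$, which is handled by the contour-shift in \S\ref{sec 4.8} against $\frac{\zeta'}{\zeta}(1-s)\chi(s)\zeta'(1-s)$ and the Gonek/Landau explicit formula, and it is exactly what lets $k$ range over all positive reals. In your version the term that must be bounded from \emph{below} is the twisted second moment $\sum_\rho|\zeta'(\rho)|^2|\mathcal{N}(\rho)|^{2(k-1)}$, which is a genuinely harder object; Ng's asymptotic applies to $\sum|\zeta'(\rho)|^2|D(\rho)|^2$ for a specific Dirichlet polynomial $D$ and would not directly yield a matching lower bound for $|\mathcal{N}(\rho)|^{2(k-1)}$ with non-integer exponent and your product-of-truncated-exponentials $\mathcal{N}$. (In the paper, the quantity $\sum_\rho|\zeta'(\rho)|^2|\mathcal{N}(\rho,k-1)|^2$ does appear in Proposition~\ref{Prop6}, but only as an \emph{upper} bound, which is far less demanding and is obtained by Kirila's method; it is not asked to carry the main term.)

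The second and more serious gap is $0\le k<1$, which is the only genuinely new range in Theorem~\ref{thmlowerboundJ} (the case of integer $k\ge 1$ is already Milinovich--Ng via exactly your R--S inequality). For $0<k<1$ your H\"older as written involves the power $|\mathcal{N}(\rho)|^{2(k-1)}$ with negative exponent, so the inequality does not hold in that form, and neither of your two proposed fixes closes the gap: ``interpolation between $k=0$ and $k=1$'' does not transfer lower bounds on non-convex quantities such as $(\log T)^{k(k+2)}$, and ``a dual H\"older rearrangement in the spirit of Heap--Soundararajan'' is precisely the missing ingredient rather than a description of it. The concrete content of that missing ingredient, which you would need to supply, is the three-factor H\"older of Lemma~\ref{lem1} combined with the pointwise inequality of Lemma~\ref{lemNbounds} bounding $|\mathcal{N}_j(s,k)|^{2/k}|\mathcal{N}_j(s,k-1)|^2$ (respectively $|\mathcal{N}_j(s,k-1)\mathcal{N}_j(s,k)|^{2k/(2k-1)}$) by $|\mathcal{N}_j(s,k)|^2$ plus a correction term $|\mathcal{Q}_j(s,k)|^{2r_k}$; without this the fractional powers appearing after H\"older cannot be converted into short Dirichlet polynomial moments amenable to the explicit formula.

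In short: your framework correctly identifies the ingredients (truncated-exponential mollifiers, good-set/bad-set splitting, Landau--Gonek explicit formula), but the H\"older step at the heart of the argument is the wrong one for the target range of $k$, and patching it is not a routine modification but is the main technical content of the paper.
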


   We shall instead apply the lower bounds principal of W. Heap and K. Soundararajan \cite{H&Sound} in the proof of Theorem \ref{thmlowerboundJ}. The proof also uses the arguments by A. J. Harper in  \cite{Harper} and by S. Kirila in \cite{Kirila}. Combining Theorem \ref{thmlowerboundJ} and the above mentioned result of S. Kirila in \cite{Kirila}, we immediately obtain the following result concerning the order
of magnitude of $J_k(T)$.
\begin{corollary}
\label{cororderofmag}
   Assuming RH. For large $T$, the estimation given in  \eqref{Jksim} is valid for all real $k \geq 0$.
\end{corollary}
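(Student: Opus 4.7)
The corollary follows immediately by combining matching upper and lower bounds on $J_k(T)$: Kirila's upper bound $J_k(T) \ll_k (\log T)^{k(k+2)}$ for all real $k > 0$ from \cite{Kirila}, and the lower bound for all real $k \geq 0$ supplied by Theorem \ref{thmlowerboundJ}. For $k = 0$ the statement is trivial since $J_0(T) = 1$; for $k > 0$ the two estimates sandwich $J_k(T)$ and yield \eqref{Jksim}. Thus the only substantive task is to establish the lower bound in Theorem \ref{thmlowerboundJ}.

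I would implement the Heap--Soundararajan lower bounds principle \cite{H&Sound} in the discrete setting. Fix $k > 0$ and, following Harper's block construction \cite{Harper}, partition the primes $p \leq T^{\theta}$ (for a small $\theta > 0$) into blocks $\mathcal{P}_j$ of geometrically increasing size. Set $P_j(s) = \sum_{p \in \mathcal{P}_j} p^{-s}$ and, for each non-trivial zero $\rho$, define a mollifier
\[
\mathcal{N}(\rho, \alpha) = \prod_j \sum_{r = 0}^{K_j} \frac{(\alpha P_j(\rho))^r}{r!},
\]
with truncation lengths $K_j$ chosen so that $\mathcal{N}(\rho, \alpha)$ is a good proxy for $\zeta(\rho + 1/\log T)^{\alpha}$ on a subset of zeros of density $1 - o(1)$. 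Applying H\"older's inequality (in the form appropriate to the range of $k$) to a twisted quantity of the shape $\sum_\rho |\zeta'(\rho)|^{2} |\mathcal{N}(\rho, k - 1)|^{2}$ reduces the lower bound for $J_k(T)$ to (i) a sharp lower bound for this twisted discrete second moment, and (ii) a matching upper bound for a higher mollifier moment of $|\mathcal{N}(\rho, k - 1)|$.

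Step (ii) is handled by expanding the relevant power of $|\mathcal{N}(\rho, k - 1)|$ into a Dirichlet polynomial of length $T^{o(1)}$ and estimating the sum over zeros by a Gonek-type lemma; the Harper truncations are precisely what ensure that the resulting arithmetic sum is of the expected size. For step (i), I would use Gonek's explicit formula to write $\sum_\rho \zeta'(\rho) \overline{\zeta'(\rho)} F(\rho)$ as a double contour integral and extract the diagonal contribution from the Dirichlet expansion of $\mathcal{N} \overline{\mathcal{N}}$. This part generalises N.~Ng's evaluation of $J_2(T)$ in \cite{Ng} and parallels the diagonal analysis underlying Kirila's upper bound in \cite{Kirila}.

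The twisted discrete second moment is the main obstacle, and it is more delicate than its continuous-moment analogue for two reasons. First, the presence of the derivative $\zeta'(\rho)$ forces an extra differentiation at the poles of $\zeta'/\zeta$, which is handled via $\zeta'(\rho) = \lim_{s \to \rho} \zeta(s)/(s - \rho)$. Second, treating all real $k > 0$ uniformly requires choosing the Harper block parameters $K_j$ so that the approximation $\mathcal{N}(\rho, k - 1) \approx \zeta(\rho + 1/\log T)^{k - 1}$ is effective throughout the range and the off-diagonal terms remain negligible. Once these points are in place, feeding both estimates into H\"older's inequality and dividing through yields $J_k(T) \gg_k (\log T)^{k(k + 2)}$, and combining with Kirila's upper bound gives the corollary.
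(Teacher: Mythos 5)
Your first paragraph is exactly the paper's proof of the corollary: combine Kirila's upper bound $J_k(T) \ll_k (\log T)^{k(k+2)}$ from \cite{Kirila} with the lower bound of Theorem \ref{thmlowerboundJ}, and dispose of $k=0$ trivially. Since Theorem \ref{thmlowerboundJ} is established separately in the paper, nothing more is required, and your corollary argument is correct.

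The rest of your proposal sketches a proof of Theorem \ref{thmlowerboundJ} and there the roles of the key quantities are inverted. In the paper's Heap--Soundararajan implementation (Lemma \ref{lem1}), the twisted second moment $\sum_\rho |\zeta'(\rho)|^{2} |\mathcal{N}(\rho, k-1)|^{2}$ is \emph{not} the object that receives a sharp lower bound; it appears on the majorizing side of the H\"older inequality (for $0<k\le 1/2$) and is given an \emph{upper} bound of size $T(\log T)^{k^2+3}$ in Proposition \ref{Prop6}. The quantity that is lower-bounded is $\sum_\rho \zeta'(\rho)\,\mathcal{N}(\rho, k-1)\,\mathcal{N}(1-\rho, k)$ (Proposition \ref{Prop4}), carrying a single factor of $\zeta'(\rho)$ together with both mollifier pieces $\mathcal{N}(\cdot, k-1)$ and $\mathcal{N}(\cdot, k)$. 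Running H\"older starting from $\sum |\zeta'(\rho)|^{2}|\mathcal{N}(\rho,k-1)|^{2}$ and asking for a lower bound on that sum plus an upper bound on a higher mollifier moment does not yield a legitimate exponent assignment for $0<k<1$ --- precisely the range where the Heap--Soundararajan scheme is needed --- so the sketch as written would not go through. This does not affect the correctness of your proof of the corollary, which only invokes the statement of Theorem \ref{thmlowerboundJ}.
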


\section{Preliminaries}
\label{sec 2}

 We now introduce some notations and auxiliary results used in the paper. We assume the truth of RH throughout so that we may write each nontrivial zero $\rho$ of $\zeta(s)$ as $\rho=\half+i\gamma$, where we denote $\gamma \in \mr$ for the imaginary part of $\rho$. We denote  $\omega(n)$ for the number of distinct prime factors of $n$ and $\Omega(n)$ for the number of prime powers dividing $n$.  We note the following estimation (see \cite[Theorem 2.10]{MVa}) for $\omega(n)$,
\begin{align}
\label{omegabound}
  \omega(n) \leq \frac {\log n}{\log \log n}(1+O(\frac {1}{\log \log n})), \quad n \geq 3.
\end{align}

   We also define $\Lambda_j(n)$ for all integers $j \geq 0$ to be the coefficient of $n^{-s}$ in the Dirichlet series expansion of $(-1)^{j}\zeta^{(j)}(s)/\zeta(s)$. In particular, we have $\Lambda_1(n)=\Lambda(n)$, the usual von Mangoldt function.  We extend the definition of $\Lambda$ to all real numbers $x$ by defining $\Lambda(x)=0$ when $x$ is not an integer and we note the following uniform version of Landau’s
formula \cite{Landau1912}, originally proved by  S. M. Gonek \cite{Gonek93}.
\begin{lemma}
\label{Lem-Landau}
	Assume RH. Then we have for $T$ large and any positive integers $a, b$,
\begin{align}
\label{sumgamma}
\begin{split}
	\sum_{T<\gamma\le 2T}(a/b)^{i\gamma}=
\begin{cases}
N(T, 2T), \quad a=b, \\
\displaystyle -\frac{T}{2\pi}\frac{\Lambda(a/b)}{\sqrt{a/b}}+O\big(\sqrt{ab}(\log T)^2\big), \quad a>b, \\
\displaystyle -\frac{T}{2\pi}\frac{\Lambda(b/a)}{\sqrt{b/a}}+O\big(\sqrt{ab}(\log T)^2\big), \quad b>a,
\end{cases}
\end{split}
\end{align}
  where we denote $N(T, 2T)$ for the number of nontrivial zeros $\rho$ such that $T < \Im(\rho) \leq 2T$.
\end{lemma}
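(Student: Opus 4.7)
The case $a=b$ is immediate since the summand is identically $1$. For $a\ne b$, complex conjugation (which swaps the roles of $a$ and $b$ while preserving the statement) lets us assume $a>b$. Setting $x:=a/b>1$ and using RH in the form $x^{i\gamma}=x^{\rho-1/2}$, the claim reduces to
$$\sum_{T<\gamma\le 2T}x^\rho \;=\; -\frac{T}{2\pi}\Lambda(a/b)\;+\;O\bigl(a(\log T)^2\bigr).$$

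I would proceed via the standard contour integral argument underlying Landau's formula, made uniform in $x$ in the manner of Gonek. Integrate $-\zeta'(s)/\zeta(s)\cdot x^s$ around the positively oriented rectangle $\mathcal R$ with vertices $c\pm iT_j$, $1-c\pm iT_j$ ($j=1,2$), where $c=1+1/\log T$, and $T_1\in[T,T+1]$, $T_2\in[2T,2T+1]$ are chosen by pigeonhole (using that $\ll\log T$ zeros lie in any unit interval) so that the horizontal edges remain at distance $\gg 1/\log T$ from every ordinate $\gamma$. Under RH, $-\zeta'/\zeta(s)$ has residue $+1$ at $s=1$ and residue $-1$ at each nontrivial zero, so the residue theorem yields
$$\frac{1}{2\pi i}\oint_{\mathcal R}\Bigl(-\frac{\zeta'(s)}{\zeta(s)}\Bigr)x^s\,ds \;=\; x-\sum_{T_1<\gamma\le T_2}x^\rho,$$
where the residue $x$ at $s=1$ is absorbed into the error since $x\le a$, and the discrepancy between $\sum_{T_1<\gamma\le T_2}$ and $\sum_{T<\gamma\le 2T}$ involves only $\ll\log T$ zeros each contributing $|x^\rho|=\sqrt x\le\sqrt a$, also absorbed.

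The main term then comes from the right edge $\sigma=c$: inserting the Dirichlet series $-\zeta'/\zeta(s)=\sum_n\Lambda(n)n^{-s}$ and integrating termwise, the diagonal term $n=a/b$ (nonzero precisely when $a/b$ is a prime power) produces $\Lambda(a/b)(T_2-T_1)/(2\pi)=\tfrac{T}{2\pi}\Lambda(a/b)+O(\log T)$, while the off-diagonal terms give an oscillatory sum bounded by $\sum_{n\ne a/b}\Lambda(n)(x/n)^c/|\log(x/n)|$, estimated by isolating the few integers closest to $x=a/b$ (where the bound $|\log(n/x)|\gg 1/a$ is essential) from the rest (handled dyadically). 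The two horizontal edges, controlled via the standard RH estimate $|\zeta'/\zeta(\sigma+iT_j)|\ll(\log T)^2$ away from zeros, together with $\int_{1-c}^c x^\sigma d\sigma\ll x$, contribute $O(a(\log T)^2)$; the left edge is transferred to the right by the functional equation for $\zeta'/\zeta$ and contributes only $O((\log T)/x)$.

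The main technical obstacle is the sharp control of the off-diagonal contribution on the right edge: the integer $n$ nearest the rational $x=a/b$ can be within distance $1/b$ of $x$, forcing $|\log(n/x)|$ to be as small as $1/a$, so a single such term already carries the full $a$ appearing in the target error. Careful bookkeeping of these near-diagonal terms, followed by undoing the reduction via $\sum(a/b)^{i\gamma}=(b/a)^{1/2}\sum x^\rho$, produces the clean $\sqrt{ab}(\log T)^2$ error shape in \eqref{sumgamma}; everything else on the contour is routine.
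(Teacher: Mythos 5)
Your route is genuinely different from the paper's: the paper dispatches the case $a=b$ as trivial and simply cites \cite[Lemma 5.1]{Kirila} (which in turn follows Gonek's uniform version of Landau's formula in \cite{Gonek93}) for $a\neq b$, whereas you rederive the result from scratch via contour integration. Your architecture is correct --- conjugation to reduce to $a>b$, the rescaling $x^{i\gamma}=x^{-1/2}x^\rho$ under RH, pigeonholing $T_1,T_2$ to lie at distance $\gg 1/\log T$ from all ordinates, inserting the Dirichlet series of $-\zeta'/\zeta$ on $\sigma=c$, and transferring the left edge by the functional equation --- and you correctly isolate the crux: since the integer nearest $x=a/b$ can sit at distance $1/b$, one has $|\log(n/x)|^{-1}\ll a$, which is exactly the origin of the factor $\sqrt{ab}$ after undoing the $x^{-1/2}$ normalization. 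Two points to tighten. First, with $T_1\in[T,T+1]$ and $T$ large, the rectangle $\{1-c\le\sigma\le c,\ T_1\le t\le T_2\}$ does \emph{not} enclose $s=1$, so the residue $x$ should not appear in your residue-theorem identity; since you absorb it into the error term anyway the final bound is unaffected, but the identity as written is a slip. Second, you should say explicitly that the claimed estimate is trivially true once $\sqrt{ab}\gg T/\log T$, because the left side is at most $N(T,2T)\ll T\log T$; this lets you assume $\log a,\log b\ll\log T$ throughout, which is what turns the near-diagonal contribution of size $a\log a$ into the stated $a(\log T)^{2}$. What the paper's citation buys is brevity; your route buys a self-contained argument that makes visible exactly where the $\sqrt{ab}$ in the error comes from.
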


  The cases when $a \neq b$ of Lemma \ref{Lem-Landau} are given in \cite[Lemma 5.1]{Kirila} while the case $a =b$ of Lemma \ref{Lem-Landau} is trivial. Recall that the Riemann–von Mangoldt formula asserts (see \cite[Chapter 15]{Da}) that
\begin{align*}
   N(T)=\frac {T}{2\pi}\log \frac {T}{2\pi e}+O(\log T).
\end{align*}
 It follows from this and the relation $N(T, 2T)=N(2T)-N(T)$ that
\begin{align}
\label{N2Tbound}
   N(T, 2T) \ll T \log T.
\end{align}

   We reserve the letter $p$ for a prime number in this paper and we note that $\Lambda_2(n)$ is supported on integers $n$ with $\omega(n) \leq 2$ satisfying for primes $p, q, p \neq q$ and positive integers $i, j$,
\begin{align}
\label{Lambda2}
   \Lambda_2(p^i) \ll i(\log p)^2, \quad \Lambda_2(p^iq^j) \ll (\log p)(\log q).
\end{align}

  We recall the following well-known Mertens' formula (see \cite[Theorem 2.7]{MVa1}).
\begin{lemma} \label{RS} Let $x \geq 2$. We have, for some constant $b$,
$$
\sum_{p\le x} \frac{1}{p} = \log \log x + b+ O\Big(\frac{1}{\log x}\Big).
$$
\end{lemma}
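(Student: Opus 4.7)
The plan is to establish Mertens' formula in two stages: first prove the Chebyshev-type estimate $\sum_{p \le x}(\log p)/p = \log x + O(1)$, and then deduce the stated asymptotic for $\sum_{p \le x}1/p$ by partial summation.

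For the first stage, I would begin with the classical identity
\begin{equation*}
\log \lfloor x \rfloor! = \sum_{p^k \le x} \Big\lfloor \frac{x}{p^k} \Big\rfloor \log p,
\end{equation*}
obtained by computing the $p$-adic valuation of $\lfloor x\rfloor!$ for each prime $p$. Stirling's formula gives the left-hand side as $x\log x - x + O(\log x)$. On the right-hand side I would replace $\lfloor x/p^k\rfloor$ by $x/p^k + O(1)$; the accumulated error is $O(\sum_{p^k\le x}\log p) = O(x)$ using Chebyshev's classical bound for $\theta(x)$. Splitting the main term into $k=1$ and $k\ge 2$, the contribution from higher prime powers is $x \sum_{k\ge 2}\sum_{p\le x^{1/k}}(\log p)/p^k = O(x)$, since $\sum_p (\log p)/p^2$ already converges. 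Rearranging and dividing by $x$ produces the desired estimate $\sum_{p\le x}(\log p)/p = \log x + O(1)$.

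For the second stage, set $A(t) := \sum_{p \le t}(\log p)/p = \log t + E(t)$ with $E(t) = O(1)$, and use $1/p = ((\log p)/p)\cdot(1/\log p)$. Partial summation then yields
\begin{equation*}
\sum_{p \le x} \frac{1}{p} = \frac{A(x)}{\log x} + \int_2^x \frac{A(t)}{t(\log t)^2}\,dt = 1 + \frac{E(x)}{\log x} + \int_2^x \frac{dt}{t\log t} + \int_2^x \frac{E(t)}{t(\log t)^2}\,dt.
\end{equation*}
The middle integral equals $\log\log x - \log\log 2$. The remaining error integral converges as $x\to\infty$ to some constant $C_1$, with tail $\int_x^\infty E(t)/(t(\log t)^2)\,dt = O(1/\log x)$ since $E(t) = O(1)$. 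Collecting all constants into a single $b$ gives the claimed formula with the error $O(1/\log x)$.

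The only point requiring genuine care is the Chebyshev-type estimate in the first stage; once that is in hand, the passage to Mertens' formula is essentially automatic, and the sharp error term $O(1/\log x)$ drops out of the partial summation without any input beyond $E(t) = O(1)$. In particular, no prime-number-theorem input is needed, which is why this estimate belongs to the elementary part of analytic number theory in the spirit of Mertens' original 1874 argument.
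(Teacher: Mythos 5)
Your proof is correct, and since the paper simply cites this as a well-known result (Montgomery--Vaughan, Theorem 2.7) without reproducing an argument, the appropriate comparison is with the standard textbook proof — which is exactly what you have written: Legendre's formula plus Stirling to get $\sum_{p\le x}(\log p)/p = \log x + O(1)$, then partial summation with the tail-of-a-convergent-integral trick to extract both the constant $b$ and the $O(1/\log x)$ error. The only cosmetic point is that $\sum_{p^k\le x}\log p$ is $\psi(x)$ rather than $\theta(x)$, but the Chebyshev bound $\psi(x)=O(x)$ is what you need and is equally classical, so nothing is affected.
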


  We end this section by including a mean value theorem given in \cite[Lemma 4.1]{MN1} concerning integrals over Dirichlet polynomials.
\begin{lemma}
\label{Lem-MVDP}
	Let $\{a_n\}$ and $\{b_n\}$ be sequences of complex numbers. Let $T_1$ and $T_2$ be positive real numbers and $g(t)$ be a real-valued function that is continuously differentiable on the interval $[T_1, T_2]$. Then
\begin{align*}
%%\label{MVDPest}
\begin{split}
& \int^{T_2}_{T_1}g(t)\left ( \sum^{\infty}_{n=1}a_nn^{-it}\right )\left( \sum^{\infty}_{n=1}b_nn^{it}\right )dt \\
=& \int^{T_2}_{T_1}g(t)dt\sum^{\infty}_{n=1}a_nb_n+O\left ( \left (|g(T_1)|+|g(T_2)|+\int^{T_2}_{T_1}|g'(t)|dt \right )\left ( \sum^{\infty}_{n=1}n|a_n|^2\right )^{1/2}\left( \sum^{\infty}_{n=1}n|b_n|^2 \right )^{1/2}\right ) .
\end{split}
\end{align*}
\end{lemma}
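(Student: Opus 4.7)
The plan is to expand the product of the two Dirichlet series as a double sum, integrate termwise, split off the diagonal, and then handle the oscillatory off-diagonal integrals by integration by parts followed by a weighted Hilbert-type (Montgomery--Vaughan) inequality.

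First I would write
\begin{align*}
\left(\sum_{m=1}^\infty a_m m^{-it}\right)\left(\sum_{n=1}^\infty b_n n^{it}\right) = \sum_{m=n} a_m b_n + \sum_{m \neq n} a_m b_n \left(\frac{n}{m}\right)^{it},
\end{align*}
so that multiplying by $g(t)$ and integrating over $[T_1,T_2]$ produces the diagonal main term $\int_{T_1}^{T_2} g(t)\,dt \sum_n a_n b_n$ plus off-diagonal contributions $\sum_{m\neq n} a_m b_n I_{m,n}$ with $I_{m,n}=\int_{T_1}^{T_2} g(t)(n/m)^{it}\,dt$. (Convergence/interchange is justified routinely by Cauchy--Schwarz using that $\sum n|a_n|^2, \sum n|b_n|^2$ are finite, which is part of the hypothesis; otherwise the bound is vacuous.)

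Next, for $m \neq n$ I would integrate by parts, using $\tfrac{d}{dt}[(n/m)^{it}] = i\log(n/m)(n/m)^{it}$, to obtain
\begin{align*}
I_{m,n} = \left[ \frac{g(t)(n/m)^{it}}{i\log(n/m)}\right]_{T_1}^{T_2} - \int_{T_1}^{T_2} \frac{g'(t)(n/m)^{it}}{i\log(n/m)}\,dt,
\end{align*}
which gives the pointwise bound $|I_{m,n}| \le G /|\log(n/m)|$ where $G:=|g(T_1)|+|g(T_2)|+\int_{T_1}^{T_2}|g'(t)|\,dt$. This reduces the off-diagonal to bounding $G \sum_{m\neq n} |a_m||b_n|/|\log(n/m)|$.

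Finally, I would invoke the Montgomery--Vaughan weighted Hilbert inequality (the $\log(m/n)$-spaced form, as in \cite{MVa1}), which asserts
\begin{align*}
\sum_{m\neq n} \frac{|a_m||b_n|}{|\log(n/m)|} \ll \left(\sum_n n|a_n|^2\right)^{1/2}\left(\sum_n n|b_n|^2\right)^{1/2};
\end{align*}
combining with the previous step yields exactly the claimed error term. The main technical input -- and the only non-routine obstacle -- is this weighted Hilbert inequality, where the weight $n$ arises because the $\log n$ are separated only by spacing $\gtrsim 1/n$; everything else is bookkeeping (expansion, integration by parts, and recognizing the bound $G$ as the quantity $|g(T_1)|+|g(T_2)|+\int|g'|$).
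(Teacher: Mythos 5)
The paper does not actually prove this lemma; it is quoted verbatim from \cite[Lemma 4.1]{MN1} and ultimately rests on the Montgomery--Vaughan mean value theorem. Your architecture (diagonal/off-diagonal split, integration by parts in $t$, then a weighted Hilbert inequality) is the standard and correct one, but there is a genuine gap in your last two steps. The Montgomery--Vaughan generalized Hilbert inequality bounds the \emph{signed} (complex) bilinear form
\[
\Bigl|\sum_{m\neq n}\frac{x_m\overline{y_n}}{\lambda_m-\lambda_n}\Bigr|\ll\Bigl(\sum_m\delta_m^{-1}|x_m|^2\Bigr)^{1/2}\Bigl(\sum_n\delta_n^{-1}|y_n|^2\Bigr)^{1/2},
\]
and the cancellation coming from the sign of $\lambda_m-\lambda_n$ is essential. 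The absolute-value version you invoke,
\[
\sum_{m\neq n}\frac{|a_m|\,|b_n|}{|\log(n/m)|}\ll\Bigl(\sum_n n|a_n|^2\Bigr)^{1/2}\Bigl(\sum_n n|b_n|^2\Bigr)^{1/2},
\]
is false: taking $a_n=b_n=1$ for $n\le N$, the near-diagonal terms with $|m-n|=k$ contribute about $n/k$ each, so the left side is $\asymp N^2\log N$ while the right side is $\asymp N^2$. Hence passing first to the pointwise bound $|I_{m,n}|\le G/|\log(n/m)|$ and then summing moduli loses a logarithm and cannot yield the stated error term.

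The repair is standard: do not take absolute values inside the double sum. After integrating by parts, keep the phases and write the off-diagonal contribution as
\[
\Bigl[\frac{g(t)}{i}\sum_{m\neq n}\frac{a_mm^{-it}\,b_nn^{it}}{\log(n/m)}\Bigr]_{T_1}^{T_2}-\frac{1}{i}\int_{T_1}^{T_2}g'(t)\sum_{m\neq n}\frac{a_mm^{-it}\,b_nn^{it}}{\log(n/m)}\,dt,
\]
then apply the Montgomery--Vaughan inequality with $\lambda_n=\log n$ and $\delta_n\asymp 1/n$ to the inner bilinear form for each fixed $t$, using coefficients $x_m=a_mm^{-it}$ and $y_n=\overline{b_n}n^{-it}$, whose moduli equal $|a_m|$ and $|b_n|$. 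This gives a bound uniform in $t$ of the desired shape, and the two boundary evaluations together with $\int_{T_1}^{T_2}|g'(t)|\,dt$ produce exactly the factor $|g(T_1)|+|g(T_2)|+\int_{T_1}^{T_2}|g'(t)|\,dt$. With that correction the argument is complete.
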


\section{Proof of Theorem \ref{thmlowerboundJ}}
\label{sec 2'}

\subsection{The lower bound principle}

    We assume that $T$ is a large number throughout the proof. As the case $k=0$ is trivial, we only consider the case $k>0$ in the proof.   
Moreover, we note that in the rest of the paper, the explicit constants involved in estimations using $\ll$ or the big-$O$ notations depend on $k$ only and are uniform with
 respect to $\rho$. We further make the convention that an empty product is defined to be $1$.

 We follow the ideas of A. J. Harper in \cite{Harper} and the notations of S. Kirila in \cite{Kirila} to define for a large number $M$ depending on $k$ only,
$$ \alpha_{0} = 0, \;\;\;\;\; \alpha_{j} = \frac{20^{j-1}}{(\log\log T)^{2}} \;\;\; \forall \; j \geq 1, \quad
\mathcal{J} = \mathcal{I}_{k,T} = 1 + \max\{j : \alpha_{j} \leq 10^{-M} \} . $$

   It follows from the above notations and Lemma \ref{RS} that we have for $1 \leq j \leq \mathcal{J}-1$ and $T$ large enough,
\begin{align}
\label{sumpj}
 \sum_{T^{\alpha_{j}} < p \leq T^{\alpha_{j+1}}} \frac{1}{p}
 = \log \alpha_{j+1} - \log \alpha_{j} + o(1) = \log 20 + o(1) \leq 10.
\end{align}

  We denote for any real number $\ell$ and any $x \in \mr$,
\begin{align*}
%%\label{E}
  E_{\ell}(x) = \sum_{j=0}^{\lceil \ell \rceil} \frac {x^{j}}{j!}.
\end{align*}

 We then define for any real number $\alpha$ and any $1\leq j \leq \mathcal{J}$,
\begin{align*}
%% \label{defP}
 {\mathcal P}_j(s)=&  \sum_{ p \in I_j}  \frac{1}{p^s}, \quad {\mathcal N}_j(s, \alpha) = E_{e^2k\alpha^{-3/4}_j} \Big (\alpha {\mathcal P}_j(s) \Big ), \quad  {\mathcal N}(s, \alpha)=  \prod^{\mathcal{J}}_{j=1} {\mathcal N}_j(s, \alpha).
\end{align*}

  Denote $g(n)$ for the multiplicative function given on prime powers by $g(p^{r}) = 1/r!$ and define functions $b_j(n), 1 \leq j \leq {\mathcal{J}}$  such that $b_j(n)=0$ or $1$ and that $b_j(n)=1$ only when $n$ is composed of at most $\lceil e^2k\alpha^{-3/4}_j \rceil$ primes, all from the interval $I_j$. We then have
\begin{align*}
%% \label{defP}
 {\mathcal N}_j(s,\alpha) = \sum_{n_j}  \frac{\alpha^{\Omega(n_j)}}{g(n_j)}  b_j(n_j) \frac 1{n^s_j}, \quad 1\le j \le {\mathcal{J}}.
\end{align*}

  Note that each ${\mathcal N}_j(s,\alpha)$ is a short Dirichlet polynomial of length at most $T^{\alpha_{j}\lceil e^2k\alpha^{-3/4}_j \rceil}$. By taking $T$ large enough, we notice that
\begin{align*}
%%\label{exponentbound}
 \sum^{\mathcal{J}}_{j=1} \alpha_{j}\lceil e^2k\alpha^{-3/4}_j \rceil \leq 40e^2k10^{-M/4}.
\end{align*}
   It follows that ${\mathcal N}(s, \alpha)$ is also a short Dirichlet polynomial of length at most $T^{40 e^2k10^{-M/4}}$.

   Moreover, we write for simplicity that
\begin{align}
\label{Nskexpression}
 {\mathcal N}(s, \alpha)= \sum_{n} \frac{a_{\alpha}(n)}{n^s}.
\end{align}
   We note that $a_{\alpha}(n) \neq 0$ only when $n=\prod_{1\leq j \leq \mathcal{J}}n_j$, in which case we have
\begin{align*}
  a_{\alpha}(n)= \prod_{n_j}  \frac{\alpha^{\Omega(n_j)}}{g(n_j)}  b_j(n_j).
\end{align*}

   Taking note also the estimation that for all integers $i \geq 0$,
\begin{align*}
  \frac {\alpha^{i}}{i!} \ll  e^{|\alpha|},
\end{align*}
   we conclude from above discussions that for all $n$,
\begin{align}
\label{anbound}
  a_{\alpha}(n) \leq e^{|\alpha|\omega(n)}, \quad a_k(n) = 0, \text{ when } n > T^{40 e^2k10^{-M/4}}.
\end{align}

   We apply the above estimations and \eqref{omegabound} in \eqref{Nskexpression} to see that for $\Re(s) \geq -1/\log T$ and $T$ large enough,
\begin{align}
\label{Nnormbound}
 |{\mathcal N}(s,\alpha)| \ll e^{|\alpha|\frac {\log T}{\log \log T}(1+O(\frac {1}{\log \log T}))}T^{40e^2k10^{-M/4}(1+1/\log T)}.
\end{align}

  In the proof of Theorem \ref{thmlowerboundJ}, we need the following bounds concerning expressions involving with various  ${\mathcal N}(s, \alpha)$ .
\begin{lemma}
\label{lemNbounds}
 With the notations as above, we have for $0<k \leq 1/2$ and $1 \leq j \leq \mathcal{J}$,
\begin{align}
\label{est0}
\begin{split}
|\mathcal{N}_j(s,
 k)|^{2/k}|\mathcal{N}_j(s, k-1)|^{2} \le
|{\mathcal N}_j(s, k)|^2 \left( 1+e^{-e^2k\alpha^{-3/4}_j} \right )^{2/k+2}\left( 1-e^{-e^2k\alpha^{-3/4}_j} \right )^{-2} + |{\mathcal Q}_j(s, k)|^{2r_k}.
\end{split}
\end{align}

  We also have  for $k >1/2$ and $1 \leq j \leq \mathcal{J}$,
\begin{align}
\label{est0'}
\begin{split}
|\mathcal{N}_j(s, k-1)\mathcal{N}_j(s, k)|^{\frac {2k}{2k-1}}   \le
|{\mathcal N}_j(s, k)|^2  \left( 1+ e^{-e^2k\alpha^{-3/4}_j}
\right)^{\frac {2k}{2k-1}}\left( 1-e^{-e^2k\alpha^{-3/4}_j} \right )^{-2} + |{\mathcal Q}_j(s, k)|^{2r_k}.
\end{split}
\end{align}
  Here the implied constants in \eqref{est0} and \eqref{est0'} are absolute, and we define
$$
{\mathcal Q}_j(s,k) =\Big( \frac{64 \max (2, k+3/2 ) {\mathcal P}_j(s)}{\lceil e^2k\alpha^{-3/4}_j \rceil} \Big)^{ \lceil e^2k\alpha^{-3/4}_j \rceil},
$$
  with $r_k=2+\lceil 1/k \rceil$ for $0<k \leq 1/2$ and $r_k=1+\lceil 2k/(2k-1) \rceil$ for $k >1/2$.
\end{lemma}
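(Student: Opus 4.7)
My proposal is to establish both \eqref{est0} and \eqref{est0'} by a dichotomy on the size of $|\mathcal{P}_j(s)|$, in close analogy with the truncated-Euler-product technique of Harper \cite{Harper} and Kirila \cite{Kirila}. Write $L_j = \lceil e^{2}k\alpha_j^{-3/4}\rceil$, so that $\mathcal{N}_j(s,\alpha) = E_{L_j}(\alpha \mathcal{P}_j(s))$, and fix a threshold $\tau_j = c_k L_j$ with $c_k > 0$ sufficiently small (depending on $\max(2,k+3/2)$); call $|\mathcal{P}_j(s)| \le \tau_j$ the small regime and its complement the large regime.

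In the small regime, for $\alpha \in \{k-1, k\}$ one has $|\alpha \mathcal{P}_j(s)| \ll L_j$ by the choice of $c_k$, so Taylor's remainder theorem combined with Stirling's lower bound on the factorial gives
\begin{align*}
|E_{L_j}(\alpha \mathcal{P}_j(s)) - e^{\alpha \mathcal{P}_j(s)}| \le \sum_{n > L_j} \tfrac{|\alpha \mathcal{P}_j(s)|^n}{n!} \le 2 \Bigl(\tfrac{e |\alpha \mathcal{P}_j(s)|}{L_j+1}\Bigr)^{L_j+1}.
\end{align*}
Choosing $c_k$ small enough, this remainder is bounded by $e^{-e^{2}k\alpha_j^{-3/4}} |e^{\alpha \mathcal{P}_j(s)}|$, yielding two-sided bounds $(1 \mp e^{-e^{2}k\alpha_j^{-3/4}})|e^{\alpha \mathcal{P}_j(s)}| \le (\text{or } \ge)\, |\mathcal{N}_j(s,\alpha)|$. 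I would then substitute into the left-hand sides of \eqref{est0} and \eqref{est0'} and use the algebraic identities
\begin{align*}
|e^{k\mathcal{P}_j(s)}|^{2/k}|e^{(k-1)\mathcal{P}_j(s)}|^{2} = |e^{k\mathcal{P}_j(s)}|^{2} = |e^{(k-1)\mathcal{P}_j(s)} e^{k\mathcal{P}_j(s)}|^{\frac{2k}{2k-1}}
\end{align*}
to reduce the left-hand side in each case to $|e^{k\mathcal{P}_j(s)}|^{2}$ times the claimed $(1 + e^{-e^{2}k\alpha_j^{-3/4}})^{(\cdot)}$ factor; the lower bound on $|\mathcal{N}_j(s,k)|$ then converts $|e^{k\mathcal{P}_j(s)}|^{2}$ into $|\mathcal{N}_j(s,k)|^{2}/(1 - e^{-e^{2}k\alpha_j^{-3/4}})^{2}$, matching the claimed factors (any small excess in the exponent of $(1 + \cdots)$ can be absorbed into the gap $L_j \ge e^{2}k\alpha_j^{-3/4}$).

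In the large regime I would use the uniform crude bound $|E_{L_j}(z)| \le (L_j + 1)(e|z|/L_j)^{L_j}$, which follows from $L_j! \ge (L_j/e)^{L_j}$. Multiplying the resulting estimates on $|\mathcal{N}_j(s,\alpha)|$ for $\alpha \in \{k-1,k\}$ with the appropriate exponents gives
\begin{align*}
\text{LHS of \eqref{est0}} \le (L_j+1)^{2/k+2}\bigl(e \max(k,|k-1|) |\mathcal{P}_j(s)|/L_j\bigr)^{(2/k+2) L_j},
\end{align*}
and an analogous bound with exponent $(4k/(2k-1))L_j$ for \eqref{est0'}. By the definition of $r_k$, $2 r_k L_j$ exceeds these exponents by at least $2 L_j$, so $|\mathcal{Q}_j(s,k)|^{2r_k}$ carries a surplus factor $\bigl(64\max(2,k+3/2)|\mathcal{P}_j(s)|/L_j\bigr)^{2 L_j}$, which in the large regime is at least $(64\max(2,k+3/2) c_k)^{2L_j}$. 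The constant $64$ is chosen large enough that $64 \max(2,k+3/2) \ge 2 e \max(k,|k-1|)$ for all $k > 0$; this means the surplus absorbs the residual ratio $\bigl(e \max(k,|k-1|)/(64\max(2,k+3/2))\bigr)^{(2/k+2)L_j} \le 2^{-(2/k+2)L_j}$ from the left and also comfortably dominates the polynomial prefactor $(L_j+1)^{O_k(1)}$ provided $L_j$ is large, which is guaranteed for all $j \le \mathcal{J}$ once $M$ in the definition of $\mathcal{J}$ is taken sufficiently large.

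The principal obstacle is purely bookkeeping: verifying that the constants $c_k$, $e^2$, $64$, and $\max(2,k+3/2)$ can all be simultaneously calibrated so that the small-regime Taylor tail is strictly below $e^{-e^{2}k\alpha_j^{-3/4}}|e^{\alpha \mathcal{P}_j(s)}|$ and the large-regime surplus in $|\mathcal{Q}_j(s,k)|^{2r_k}$ strictly dominates every auxiliary prefactor uniformly in $j$. No genuinely new idea beyond those of Kirila \cite[Lemma 6.3]{Kirila} and Harper \cite{Harper} is needed.
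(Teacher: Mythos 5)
Your overall strategy is exactly the paper's: split on whether $|\mathcal{P}_j(s)|$ is below or above a threshold of order $L_j$; in the small regime use a Taylor-with-Stirling remainder bound to squeeze $|\mathcal{N}_j(s,\alpha)|$ between $(1 \mp e^{-e^{2}k\alpha_j^{-3/4}})|e^{\alpha\mathcal{P}_j(s)}|$ and then use the algebraic identity to convert the product into $|\mathcal{N}_j(s,k)|^2$ times the claimed constant factor; in the large regime bound $\mathcal{N}_j$ crudely by a fixed power of $|\mathcal{P}_j(s)|/L_j$ and absorb into $|\mathcal{Q}_j(s,k)|^{2r_k}$ using the surplus in the exponent $2r_kL_j$. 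This matches the paper's proof, which uses the bound from \cite[Lemma 3.4]{Gao2021-3} in the small regime and the explicit threshold $|\mathcal{P}_j(s)| \ge L_j/60$ for the large regime.

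There is one step that would fail as literally written. You invoke the \emph{uniform} crude bound $|E_{L_j}(z)| \le (L_j+1)(e|z|/L_j)^{L_j}$, but this inequality is not uniform: it holds only when $|z| \ge L_j$, so that the maximum of $|z|^r/r!$ over $0 \le r \le L_j$ sits at $r=L_j$. For $|z|<L_j$ one has instead $|E_{L_j}(z)| \le e^{|z|}$, which for fixed $|z|$ and large $L_j$ is vastly larger than $(L_j+1)(e|z|/L_j)^{L_j}$. Your large regime is $|\mathcal{P}_j(s)| \ge c_kL_j$ with $c_k$ required to be small (for the small-regime remainder to be absorbable), so applied to $z=\alpha\mathcal{P}_j(s)$ with $\alpha \in \{k-1,k\}$ one can have $|z| = |\alpha|\,|\mathcal{P}_j(s)|$ well below $L_j$, and the claimed inequality is simply false there; the ensuing comparison with $|\mathcal{Q}_j(s,k)|^{2r_k}$ is therefore not justified in that part of the large regime. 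The paper avoids this by factoring differently: from $|\mathcal{P}_j(s)| \ge L_j/60$ alone one gets $|\alpha\mathcal{P}_j(s)|^r/r! \le |(|\alpha|+1)\mathcal{P}_j(s)|^{L_j}(60/L_j)^{L_j-r}/r!$ (using $|\alpha|+1>1$ to dominate $|\alpha|^r$ for every $r\le L_j$), and summing over $r$ gives the uniform bound $\mathcal{N}_j(s,\alpha) \le (64(|\alpha|+1)|\mathcal{P}_j(s)|/L_j)^{L_j}$ valid throughout the large regime without any condition on $|\alpha\mathcal{P}_j(s)|$ itself. Since $|\alpha|+1 \le \max(2,k+3/2)$ for both $\alpha=k-1$ and $\alpha=k$, this plugs directly into $|\mathcal{Q}_j(s,k)|^{2r_k}$ and your surplus calculation then closes as you describe.
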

\begin{proof}
  As in the proof of \cite[Lemma 3.4]{Gao2021-3}, we have for $|z| \le aK/20$ with $0<a \leq 2$,
\begin{align}
\label{Ebound}
\Big| \sum_{r=0}^K \frac{z^r}{r!} - e^z \Big| \le \frac{|z|^{K}}{K!} \le \Big(\frac{a e}{20}\Big)^{K}.
\end{align}
  By taking $z=\alpha {\mathcal P}_j(s), K=\lceil e^2k\alpha^{-3/4}_j \rceil$ and $a=\min (|\alpha|, 2 )$ in \eqref{Ebound}, we see that when $|{\mathcal P}_j(s)| \le \lceil e^2k\alpha^{-3/4}_j \rceil/(20(1+|\alpha|))$,
\begin{align*}
{\mathcal N}_j(s, \alpha) \leq & \exp ( \alpha{\mathcal P}_j(s) )\left( 1+ \exp ( |\alpha {\mathcal P}_j(s)| ) \left( \frac{a e}{20} \right)^{ e^2k\alpha^{-3/4}_j} \right) \leq \exp ( \alpha {\mathcal P}_j(s)  ) \left( 1+   e^{-e^2k\alpha^{-3/4}_j}  \right).
\end{align*}

  Similarly, we have
\begin{align*}
{\mathcal N}_(s, \alpha) \geq & \exp ( \alpha{\mathcal P}_j(s) )\left( 1- \exp ( |\alpha {\mathcal P}_j(s)| ) \left( \frac{a e}{20} \right)^{ e^2k\alpha^{-3/4}_j} \right) \geq \exp ( \alpha {\mathcal P}_j(s)  ) \left( 1-   e^{-e^2k\alpha^{-3/4}_j}  \right).
\end{align*}

  We apply the above estimation to ${\mathcal N}_j(s, k), {\mathcal N}_j(s, k-1)$ to see that when $0<k \leq 1/2$ and  $|{\mathcal P}_j(s)| \le \lceil e^2k\alpha^{-3/4}_j \rceil/60$, we have
\begin{align}
\label{est1}
\begin{split}
|{\mathcal N}_j(s, k)|^{\frac {2}{k}} |{\mathcal N}_j(s, k-1)|^{2}
\leq & \exp ( 2k \Re {\mathcal P}_j(s)  ) \left( 1+  e^{-e^2k\alpha^{-3/4}_j}  \right)^{2/k+2} \\
 \leq & |{\mathcal N}_j(s, k)|^2 \left( 1+e^{-e^2k\alpha^{-3/4}_j} \right )^{2/k+2}\left( 1-e^{-e^2k\alpha^{-3/4}_j} \right )^{-2}.
\end{split}
\end{align}

  The above arguments also imply that when $k>1/2$ and $|{\mathcal P}_j(s)| \le \lceil e^2k\alpha^{-3/4}_j \rceil/60$, then
\begin{align}
\label{est1'}
\begin{split}
|\mathcal{N}_j(s,
 2k-1)|^{\frac {2k}{2k-1}}  \le
|{\mathcal N}_j(s, k)|^2  \left( 1+ e^{-e^2k\alpha^{-3/4}_j}
\right)^{\frac {2k}{2k-1}}\left( 1-e^{-e^2k\alpha^{-3/4}_j} \right )^{-2} .
\end{split}
\end{align}

 On the other hand, when $|{\mathcal P}_j(s)| \ge \lceil e^2k\alpha^{-3/4}_j \rceil/60$, we have that
\begin{align}
\label{4.2}
\begin{split}
{\mathcal N}_j(s, \alpha) \le \sum_{r=0}^{\lceil e^2k\alpha^{-3/4}_j \rceil} \frac{|\alpha{\mathcal P}_j(s)|^r}{r!} & \le
|(|\alpha|+1) {\mathcal P}_j(s)|^{\lceil e^2k\alpha^{-3/4}_j \rceil} \sum_{r=0}^{\lceil e^2k\alpha^{-3/4}_j \rceil} \Big( \frac{60}{\lceil e^2k\alpha^{-3/4}_j \rceil}\Big)^{\lceil e^2k\alpha^{-3/4}_j \rceil-r} \frac{1}{r!}  \\
&   \le \Big( \frac{64(|\alpha|+1) |{\mathcal
P}_j(s)|}{\lceil e^2k\alpha^{-3/4}_j \rceil}\Big)^{\lceil e^2k\alpha^{-3/4}_j \rceil} .
\end{split}
\end{align}

   We then set $\alpha=1$ in the last expression in \eqref{4.2} to deduce that when $0<k \leq 1/2$ and $|{\mathcal P}_j(s)| \ge \lceil e^2k\alpha^{-3/4}_j \rceil/60$,
\begin{align}
\label{est2}
\begin{split}
|{\mathcal N}_j(s, k)|^{\frac {2}{k}} |{\mathcal N}_j(s, k-1)|^{2} \leq
|{\mathcal Q}_j(s,k)|^{2r_k}.
\end{split}
\end{align}

   Moreover, we set $\alpha=k$ in the last expression in \eqref{4.2} to deduce that when $k>1/2$ and $|{\mathcal P}_j(s)| \ge \lceil e^2k\alpha^{-3/4}_j \rceil/(10(1+|\alpha|))$,
\begin{align}
\label{est2'}
\begin{split}
 |{\mathcal N}_j(s, k-1){\mathcal N}_j(s, k)|^{\frac {2k}{2k-1}}  \le |{\mathcal Q}_j(s,k)|^{2r_k}.
\end{split}
\end{align}

   The assertion of the lemma now follows from \eqref{est1}, \eqref{est1'}, \eqref{est2} and \eqref{est2'}.
\end{proof}

  Next, we state the needed lower bounds principle of W. Heap and K. Soundararajan in
  \cite{H&Sound} for our situation.
\begin{lemma}
\label{lem1}
 With notations as above. For $0<k \leq 1/2$, we have
\begin{align}
\label{basiclowerbound}
\begin{split}
\sum_{0<\gamma\leq T}\zeta'(\rho)\mathcal{N}(\rho, k-1)\mathcal{N}(\overline{\rho}, k)
 \ll & \Big ( \sum_{0<\gamma\leq T}|\zeta'(\rho)|^{2k} \Big )^{1/2}\Big ( \sum_{0<\gamma\leq T}|\zeta'(\rho)|^{2} |\mathcal{N}(\rho, k-1)|^2  \Big)^{(1-k)/2} \\
 & \times \Big ( \sum_{0<\gamma\leq T}  \prod^{\mathcal{J}}_{i=1}\big ( |{\mathcal N}_i(\rho, k)|^2+ |{\mathcal Q}_i(\rho,k)|^{2r_k} \big )
 \Big)^{k/2}.
\end{split}
\end{align}
 For $k>1/2$, we have
\begin{align}
\label{basicboundkbig}
\begin{split}
 & \sum_{0<\gamma\leq T}\zeta'(\rho)\mathcal{N}(\rho, k-1)\mathcal{N}(\overline{\rho}, k)
 \leq  \Big ( \sum_{0<\gamma\leq T}|\zeta'(\rho)|^{2k} \Big )^{\frac {1}{2k}}\Big (  \sum_{0<\gamma\leq T}\prod^{\mathcal{J}}_{j=1} \big ( |{\mathcal N}_j(\rho, k)|^2+ |{\mathcal Q}_j(\rho,k)|^{2r_k} \big ) \Big)^{\frac {2k-1}{2k}}.
\end{split}
\end{align}
  The implied constants in \eqref{basiclowerbound} and \eqref{basicboundkbig} depend on $k$ only.
\end{lemma}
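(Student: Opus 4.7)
My approach follows the standard Heap--Soundararajan lower-bound template: apply H\"older's inequality to distribute $\zeta'(\rho)$, $\mathcal{N}(\rho,k-1)$, and $\mathcal{N}(\bar\rho,k)$ among the factors appearing on the right-hand sides of \eqref{basiclowerbound} and \eqref{basicboundkbig}, and then invoke the pointwise bounds of Lemma \ref{lemNbounds} to convert the Dirichlet-polynomial factor into the stated form. Since the coefficients $a_\alpha(n)$ in \eqref{Nskexpression} are real, $\mathcal{N}(\bar s,\alpha)=\overline{\mathcal{N}(s,\alpha)}$, so the summand has absolute value $|\zeta'(\rho)||\mathcal{N}(\rho,k-1)||\mathcal{N}(\rho,k)|$, and I may work throughout with this nonnegative quantity.

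For $0<k\le 1/2$, I apply three-term H\"older with outer exponents summing to $\tfrac12+\tfrac{1-k}{2}+\tfrac k2=1$ and the factorisation
\[
|\zeta'(\rho)||\mathcal{N}(\rho,k-1)||\mathcal{N}(\rho,k)|=|\zeta'(\rho)|^{k}\cdot|\zeta'(\rho)|^{1-k}|\mathcal{N}(\rho,k-1)|^{1-k}\cdot|\mathcal{N}(\rho,k-1)|^{k}|\mathcal{N}(\rho,k)|.
\]
The resulting inner sums (obtained by raising each factor to the reciprocal $2$, $2/(1-k)$, $2/k$ of its outer exponent) are $\sum|\zeta'(\rho)|^{2k}$, $\sum|\zeta'(\rho)|^{2}|\mathcal{N}(\rho,k-1)|^{2}$, and $\sum\prod_{j=1}^{\mathcal{J}}|\mathcal{N}_j(\rho,k-1)|^{2}|\mathcal{N}_j(\rho,k)|^{2/k}$, the first two of which are exactly the first two factors in \eqref{basiclowerbound}. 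Applying \eqref{est0} factor-by-factor, together with the elementary inequality $\prod_j(a_j X_j + Y_j)\le \prod_j a_j\cdot \prod_j(X_j+Y_j)$ valid when $a_j\ge 1$, reduces the third sum to $\prod_j c_j\cdot \sum\prod_{j}\bigl(|\mathcal{N}_j(\rho,k)|^{2}+|\mathcal{Q}_j(\rho,k)|^{2r_k}\bigr)$, with $c_j=(1+e^{-e^2k\alpha_j^{-3/4}})^{2/k+2}(1-e^{-e^2k\alpha_j^{-3/4}})^{-2}\ge 1$. For $k>1/2$ I would instead use two-term H\"older with exponents $\tfrac1{2k}+\tfrac{2k-1}{2k}=1$ and the split $|\zeta'(\rho)|\cdot|\mathcal{N}(\rho,k-1)\mathcal{N}(\rho,k)|$; the inner sums are $\sum|\zeta'(\rho)|^{2k}$ and $\sum\prod_j|\mathcal{N}_j(\rho,k-1)\mathcal{N}_j(\rho,k)|^{2k/(2k-1)}$, and appealing to \eqref{est0'} analogously yields \eqref{basicboundkbig}.

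The only substantive point that remains is to verify that $\prod_{j=1}^{\mathcal{J}}c_j\ll_k 1$ uniformly in $T$ (and the analogous product arising from \eqref{est0'}). Taking logarithms reduces this to bounding $\sum_{j=1}^{\mathcal{J}}e^{-e^2k\alpha_j^{-3/4}}$. Exploiting the geometric spacing $\alpha_j=20^{j-1}/(\log\log T)^{2}$, one has $e^2k\alpha_j^{-3/4}=e^2k\alpha_{\mathcal{J}}^{-3/4}\cdot 20^{3(\mathcal{J}-j)/4}$, so the sum is dominated by its leading term at $j=\mathcal{J}$, which is $\asymp e^{-e^2k\alpha_{\mathcal{J}}^{-3/4}}$. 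Since $\alpha_{\mathcal{J}-1}\le 10^{-M}$ forces $\alpha_{\mathcal{J}}^{-3/4}\gg 10^{3M/4}$, choosing $M$ sufficiently large in terms of $k$ makes this $O_k(1)$, and absorbing $\prod_j c_j$ into the implied constants of H\"older completes the proof.
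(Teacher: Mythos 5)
Your proposal is correct and essentially reproduces the paper's argument: the same three-term (resp. two-term) H\"older decomposition with exponents $\tfrac12,\tfrac{1-k}2,\tfrac k2$ (resp. $\tfrac1{2k},\tfrac{2k-1}{2k}$), followed by a factor-by-factor application of \eqref{est0} (resp.\ \eqref{est0'}) together with the observation that $\prod_j(c_jX_j+Y_j)\le\prod_jc_j\cdot\prod_j(X_j+Y_j)$ when $c_j\ge1$. The only divergence is in how you bound the constant $\prod_jc_j$: the paper invokes the inequality $1-e^{-u}\ge e^{-1/u}$ to reduce to the convergent geometric sum $\sum_j\alpha_j^{3/4}$ (though the chain of inequalities as written in the paper has an obvious typo, dropping the $(1+e^{-\cdots})^{2/k+2}$ factor), whereas you take logarithms, use that $\log c_j\ll_k e^{-e^2k\alpha_j^{-3/4}}$, and note that these terms decay super-geometrically away from $j=\mathcal{J}$ by the spacing $\alpha_j^{-3/4}=\alpha_{\mathcal{J}}^{-3/4}\,20^{3(\mathcal{J}-j)/4}$. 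Both routes are sound; yours is arguably cleaner since it avoids the paper's garbled intermediate step and makes the dependence on $M$ transparent.
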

\begin{proof}
   We assume $0<k \leq 1/2$ first and apply H\"older's inequality to see that the left side of \eqref{basiclowerbound} is
\begin{align}
\label{basicbound0}
\begin{split}
 \leq & \Big ( \sum_{0<\gamma\leq T}|\zeta'(\rho)|^{2k} \Big )^{1/2}\Big ( \sum_{0<\gamma\leq T}|\zeta'(\rho)|^{2} |\mathcal{N}(\rho, k-1)|^2   \Big)^{(1-k)/2}\Big (\sum_{0<\gamma\leq T} |\mathcal{N}(\rho,
 k)|^{2/k}|\mathcal{N}(\rho, k-1)|^{2}  \Big)^{k/2}.
\end{split}
\end{align}

  We apply the estimation in \eqref{est0} in the last sum of \eqref{basicbound0} above and note that, upon applying the estimation that $1-e^{-u} \geq e^{-1/u}$ for $u>0$, we have
\begin{align*}
 \prod_{1 \leq j \leq {\mathcal J}} \left( 1+e^{-e^2k\alpha^{-3/4}_j} \right )^{2/k+2}\left( 1-e^{-e^2k\alpha^{-3/4}_j} \right )^{-2}
\leq & \prod_{1 \leq j \leq {\mathcal J}} \left( 1-e^{-e^2k\alpha^{-3/4}_j} \right )^{-2}
\leq \prod_{1 \leq j \leq {\mathcal J}}e^{2\alpha^{3/4}_j/(e^2k)}<\infty.
\end{align*}
  This leads to the estimation given in \eqref{basiclowerbound}.

  It remains to consider the case $k>1/2$ and we apply H\"older's inequality again to see that the left side of \eqref{basicboundkbig} is
\begin{align}
\label{holderkbig}
\begin{split}
 \leq  \Big ( \sum_{0<\gamma\leq T}|\zeta'(\rho)|^{2k} \Big )^{\frac {1}{2k}}\Big ( \sum_{0<\gamma\leq T}|\mathcal{N}(\rho, k-1)\mathcal{N}(\rho, k)|^{\frac {2k}{2k-1}}  \Big)^{\frac {2k-1}{2k}}.
\end{split}
\end{align}

  We apply the estimation in \eqref{est0'} in the last sum of  \eqref{holderkbig} above and note that the product
$$\prod_{1 \leq j \leq {\mathcal J}} \left( 1+ e^{-e^2k\alpha^{-3/4}_j}
\right)^{\frac {2k}{2k-1}}\left( 1-e^{-e^2k\alpha^{-3/4}_j} \right )^{-2} <\infty. $$
  This leads to the estimation given in \eqref{basicboundkbig} and this completes the proof.
\end{proof}

  It follows from the above lemma and the observation that $\overline{\rho}=1-\rho$ that in order to establish Theorem \ref{thmlowerboundJ}, it suffices to prove the following three propositions.
\begin{proposition}
\label{Prop4}
  With notations as above, we have for $k > 0$,
\begin{align}
\label{L1estmation}
 \sum_{0<\gamma\leq T}\zeta'(\rho)\mathcal{N}(\rho, k-1)\mathcal{N}(1-\rho, k) \gg T (\log T)^{ k^2+2}.
\end{align}
\end{proposition}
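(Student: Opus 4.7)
The plan is to open the Dirichlet polynomials in $S:=\sum_{0<\gamma\leq T}\zeta'(\rho)\mathcal{N}(\rho,k-1)\mathcal{N}(1-\rho,k)$ and reduce Proposition \ref{Prop4} to evaluating inner sums of the form $\sum_{0<\gamma\leq T}\zeta'(\rho)(n/m)^\rho$. Writing $\mathcal{N}(s,k-1)=\sum_m a_{k-1}(m)m^{-s}$ and $\mathcal{N}(1-s,k)=\sum_n a_k(n)n^{s-1}$, one obtains
$$S=\sum_{m,n}\frac{a_{k-1}(m)a_k(n)}{n}\sum_{0<\gamma\leq T}\zeta'(\rho)\Big(\frac{n}{m}\Big)^\rho,$$
where by \eqref{anbound} the outer sum is restricted to $m,n\leq T^{40e^2k10^{-M/4}}$, an arbitrarily small power of $T$ once $M$ is fixed large depending on $k$.

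To evaluate the inner sum I would partition $\gamma\in(0,T]$ dyadically into intervals $(T_0,2T_0]$ and handle the weight $\zeta'(\rho)$ via an approximate functional equation under RH (or equivalently a contour-integral representation), effectively writing $\zeta'(\rho)\approx-\sum_a(\log a)a^{-\rho}w(a)$ for a smooth cutoff $w$ supported essentially on $a\leq T$, plus a symmetric piece coming from the functional equation. Substituting and swapping summations, each inner sum reduces to a weighted family of Landau--Gonek sums $\sum_\gamma(n/(am))^\rho$ to which Lemma \ref{Lem-Landau} applies. This produces a diagonal contribution from $n=am$ of size $N(T)\asymp T\log T$, a $\Lambda$-type main term from the short shifts where $n/(am)$ or its reciprocal is a prime power, and the off-diagonal error $O(\sqrt{mn}(\log T)^2)$ from \eqref{sumgamma}.

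Summing the diagonal and short-shift contributions, the resulting expression factorises across the intervals $I_j$ by virtue of $\mathcal{N}(s,\alpha)=\prod_j\mathcal{N}_j(s,\alpha)$, and each local factor is evaluated via Mertens' formula (Lemma \ref{RS}) together with \eqref{sumpj}. Combining these local contributions with the $N(T)$ prefactor and the extra $\log T$-weight attached to $\zeta'$ should produce the claimed lower bound of order $T(\log T)^{k^2+2}$. I expect the main obstacle to be uniform control of the off-diagonal errors: summed against $a_{k-1}(m)a_k(n)$ over $m,n\leq T^{40e^2k10^{-M/4}}$, the $\sqrt{mn}(\log T)^2$ error from \eqref{sumgamma} totals $O(T^{1/2+O_k(10^{-M/4})}(\log T)^{O_k(1)})$, which is comfortably below $T(\log T)^{k^2+2}$ for $M$ chosen sufficiently large. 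A secondary technical point is the clean extraction of the extra $\log a$ weight when applying Lemma \ref{Lem-Landau}; this will follow by adapting the contour-integral derivation of Kirila's Lemma 5.1 in \cite{Kirila} to carry the additional logarithmic derivative.
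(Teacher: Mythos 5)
Your approach differs from the paper's: the paper does not open $\zeta'(\rho)$ via an approximate functional equation and apply Landau's formula. Instead it converts the sum over zeros at the outset into a contour integral of $\frac{\zeta'}{\zeta}(1-s)\chi(s)\zeta'(1-s)\mathcal{N}(s,k-1)\mathcal{N}(1-s,k)$ around a rectangle, evaluates the right edge $S_R$ via the Dirichlet-polynomial mean value theorem (Lemma~\ref{Lem-MVDP}), and the left edge via the stationary phase result Lemma~\ref{stph}, keeping full polynomial-in-$\log T$ main terms throughout. Your plan is closer in spirit to the proof of Lemma~\ref{Lem-Landau} itself than to the proof of Proposition~\ref{Prop4}.

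There are two concrete gaps. First, the ``symmetric piece'' of the approximate functional equation for $\zeta'(\rho)$ carries a $\chi(\rho)$ factor and is of the same order as the piece you treat; you cannot discard it. But $\chi(\rho)(a/b)^{i\gamma}$ is not of the form $x^{i\gamma}$ for fixed $x$ (by Stirling, $\chi(\tfrac12+i\gamma)\approx (\gamma/2\pi e)^{-i\gamma}$ up to a unimodular constant), so Lemma~\ref{Lem-Landau} does not apply to $\sum_\gamma \chi(\rho)(a/b)^{i\gamma}$. One genuinely needs a stationary phase estimate of the shape of Lemma~\ref{stph}; in the paper this is exactly where the $I_L$ term is extracted, and it produces a contribution $\frac{T}{4\pi}\sum_{n,m}\frac{a_{k-1}(m)a_k(mn)}{mn}\mathcal{P}_2\!\left(\log\frac{T}{2\pi n}\right)$ that \emph{partially cancels} the main term from the other piece. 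Your sketch is silent on this entire half of the computation.

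Second, you cannot conclude positivity or even the correct sign of the putative main term by the heuristic ``diagonal $\times$ $\log$ weight $\times$ local Euler factors.'' For $0<k<1$ the coefficients $a_{k-1}(m)$ alternate in sign (they are essentially $(k-1)^{\Omega(m)}/g(m)$), and the main terms from the two halves of the approximate functional equation have comparable magnitude with opposite tendencies. The paper must track the full degree-two polynomials $\mathcal{Q}_2,\mathcal{Q}_1,\mathcal{P}_2$, combine them into $\mathcal{Q}_2(\mathcal{L})-2\mathcal{Q}_1(\mathcal{L})\log n-\tfrac12\mathcal{P}_2(\mathcal{L}-\log n)+(\Lambda*\log)(n)$, bound the cross-term $\Lambda_2$-error by $O(10^{-M/4}T\log^2 T)$, and only then invoke the positivity argument (taking $M$ large and following \cite[p.~3214]{MN1} and \cite[Section~4]{Gao2021-4}) to get $S\gg T\log^2 T\sum_{n,m}\frac{a_{k-1}(m)a_k(mn)}{mn}\gg T(\log T)^{k^2+2}$. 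The step ``should produce the claimed lower bound'' is precisely the nontrivial content of the proposition and cannot be left implicit; without controlling the cancellation between the two pieces and the signs of $a_{k-1}$ the argument does not yield a lower bound at all.
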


\begin{proposition}
\label{Prop5}
  With notations as above, we have for $k >0$,
\begin{align*}
%%\label{Nestmation}
\sum_{0<\gamma\leq T}\prod^{\mathcal{J}}_{j=1} \big ( |{\mathcal N}_j(\rho, k)|^2+ |{\mathcal Q}_j(\rho,k)|^{2r_k} \big ) \ll & T ( \log T  )^{k^2+1}.
\end{align*}
\end{proposition}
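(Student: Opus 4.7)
The plan is to expand the product multilinearly and estimate each term via a mean value computation based on Lemma \ref{Lem-Landau}. Writing
$$\prod_{j=1}^{\mathcal{J}}\big(|\mathcal{N}_j(\rho,k)|^2 + |\mathcal{Q}_j(\rho,k)|^{2r_k}\big) = \sum_{S \subseteq \{1,\ldots,\mathcal{J}\}}|F_S(\rho)|^2,$$
with $F_S(s) := \prod_{j \in S}\mathcal{Q}_j(s,k)^{r_k}\prod_{j \notin S}\mathcal{N}_j(s,k) =: \sum_n c_S(n)n^{-s}$, the same length computation that was used just before \eqref{Nskexpression} shows each $F_S$ is a Dirichlet polynomial of length at most $T^{\theta}$, uniformly in $S$, where $\theta = O(r_k\,10^{-M/4})$ can be made arbitrarily small by choosing $M$ large.

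I would then split $(0,T]$ dyadically and apply Lemma \ref{Lem-Landau} to
$$\sum_{T_0 < \gamma \leq 2T_0}|F_S(\rho)|^2 = \sum_{m,n}\frac{c_S(m)c_S(n)}{\sqrt{mn}}\sum_{T_0 < \gamma \leq 2T_0}(n/m)^{i\gamma}.$$
The diagonal $m=n$ contributes $N(T_0,2T_0)\,\|F_S\|^2$, where $\|F_S\|^2 := \sum_n c_S(n)^2/n$; the $\Lambda$-off-diagonal is controlled by $O(T_0\log T \cdot \|F_S\|^2)$ via Cauchy--Schwarz together with $\sum_{p \leq T^{\theta}}(\log p)/p \ll \theta\log T$; and the $O(\sqrt{mn}(\log T)^2)$ error totals $T^{2\theta+o(1)}(\log T)^2 = o(T)$ for $M$ large. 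Summing dyadically via \eqref{N2Tbound} yields $\sum_{0 < \gamma \leq T}|F_S(\rho)|^2 \ll T\log T \cdot \|F_S\|^2$. Because the intervals $I_j = (T^{\alpha_{j-1}},T^{\alpha_j}]$ are pairwise disjoint, $\|F_S\|^2$ factors multiplicatively as $\prod_{j \in S}\|\mathcal{Q}_j(\cdot,k)^{r_k}\|^2\prod_{j \notin S}\|\mathcal{N}_j(\cdot,k)\|^2$.

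An Euler product computation combined with Lemma \ref{RS} yields $\|\mathcal{N}_j(\cdot,k)\|^2 = \exp\big(k^2\sum_{p\in I_j}p^{-1} + O(1)\big)$, whence $\prod_{j=1}^{\mathcal{J}}\|\mathcal{N}_j(\cdot,k)\|^2 \asymp (\log T)^{k^2}$ by the definition of $\mathcal{J}$. For $j \in S$ the elementary bound $\|\mathcal{P}_j^{\,m}\|^2 \leq m!\,(\sum_{p\in I_j}p^{-1})^m$ together with Stirling and \eqref{sumpj} produces
$$\|\mathcal{Q}_j(\cdot,k)^{r_k}\|^2 \ll \Big(\frac{C\,\sum_{p\in I_j}p^{-1}}{\lceil e^2 k \alpha_j^{-3/4}\rceil}\Big)^{r_k\lceil e^2 k\alpha_j^{-3/4}\rceil}$$
for a constant $C = C(k)$; once $M$ is sufficiently large the ratios $\|\mathcal{Q}_j(\cdot,k)^{r_k}\|^2/\|\mathcal{N}_j(\cdot,k)\|^2$ are summable over $j$, so $\sum_S\|F_S\|^2 = \prod_{j=1}^{\mathcal{J}}\big(\|\mathcal{N}_j(\cdot,k)\|^2 + \|\mathcal{Q}_j(\cdot,k)^{r_k}\|^2\big) \ll \prod_{j=1}^{\mathcal{J}}\|\mathcal{N}_j(\cdot,k)\|^2 \ll (\log T)^{k^2}$. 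Combining this with the mean value factor $T\log T$ proves the proposition. The main technical obstacle is verifying that the exponents $r_k$ fixed in Lemma \ref{lemNbounds} are indeed large enough that, after Stirling is applied to the factorial, the per-interval ratio decays fast enough in $j$ to give a convergent product uniformly in $T$; this is precisely what motivates the specific choice of $r_k$ there.
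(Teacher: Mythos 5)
Your approach is essentially the same as the paper's: split dyadically, apply the Landau--Gonek formula (Lemma \ref{Lem-Landau}) after multilinearly expanding the product, show the diagonal dominates and yields $N(T,2T)$ times a factorizable Euler-type sum, and control the off-diagonal and error contributions. Your bookkeeping of $\|F_S\|^2$, the factorization over disjoint intervals $I_j$, and the Stirling-plus-\eqref{sumpj} analysis showing the ratio $\|\mathcal{Q}_j(\cdot,k)^{r_k}\|^2/\|\mathcal{N}_j(\cdot,k)\|^2$ is summable in $j$ is exactly the computation the paper outsources to the analogue of (6.20) in the cited reference, so that part is sound.

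The one place you make unnecessary work for yourself is the $\Lambda$-off-diagonal. You propose a Cauchy--Schwarz argument (which does go through, giving a bound of size $\theta\log T\cdot T_0\|F_S\|^2$ after the substitution $n=mq$ in the second factor), but the paper's proof notes a shortcut: the main terms in Lemma \ref{Lem-Landau} for $a\neq b$ carry an overall minus sign, namely $-\tfrac{T}{2\pi}\Lambda(\cdot)/\sqrt{\cdot}$, and the coefficients $c_S(n)$ are manifestly nonnegative because $\mathcal{N}_j(s,k)$ and $\mathcal{Q}_j(s,k)^{r_k}$ both have nonnegative Dirichlet coefficients for $k>0$. Since one seeks only an \emph{upper} bound, the off-diagonal main terms can simply be discarded. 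This is cleaner and avoids having to verify that the Cauchy--Schwarz loss is of acceptable size; your argument works, but it is worth noting the positivity shortcut since it is the structural reason this proposition is easier than the companion lower-bound computation in Proposition \ref{Prop4}.
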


\begin{proposition}
\label{Prop6}
  With notations as above, we have for $0<k \leq 1/2$,
\begin{align*}
%%\label{L2estmation}
\sum_{0<\gamma\leq T}|\zeta'(\rho)|^{2} |\mathcal{N}(\rho, k-1)|^2   \ll T ( \log T  )^{k^2+3}.
\end{align*}
\end{proposition}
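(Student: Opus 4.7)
\textit{Plan.} By a dyadic decomposition it suffices to show
\begin{equation*}
S := \sum_{T/2 < \gamma \leq T} |\zeta'(\rho)\mathcal{N}(\rho, k-1)|^{2} \ll T(\log T)^{k^{2}+3}.
\end{equation*}
The strategy has two stages: first, a Cauchy integral representation converts the sum over zeros into a mean square on the critical line; second, a standard mollifier computation evaluates that mean square.

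For the first stage, since $\zeta(\rho) = 0$ at any simple zero, the function $s \mapsto \zeta(s)\mathcal{N}(s,k-1)/(s-\rho)^{2}$ has a simple pole at $\rho$ with residue $\zeta'(\rho)\mathcal{N}(\rho, k-1)$; multiple zeros (if any) contribute nothing to $S$ since $\zeta'(\rho)=0$ there, so we may restrict to simple zeros. Cauchy's formula on the circle $|s-\rho| = r$ yields
\begin{equation*}
\zeta'(\rho) \mathcal{N}(\rho, k-1) = \frac{1}{2\pi i} \oint_{|s-\rho|=r} \frac{\zeta(s) \mathcal{N}(s, k-1)}{(s-\rho)^{2}} ds.
\end{equation*}
The ML-inequality together with the sub-mean value property for the subharmonic function $|\zeta \cdot \mathcal{N}(\cdot, k-1)|^{2}$, applied on disks of radius $r/2$ centred at points of the circle $|s-\rho|=r$, gives
\begin{equation*}
|\zeta'(\rho)\mathcal{N}(\rho, k-1)|^{2} \ll \frac{1}{r^{4}} \iint_{|w-\rho| \leq 3r/2} |\zeta(w) \mathcal{N}(w, k-1)|^{2} dA(w).
\end{equation*}
Setting $r = c/\log T$ for a small constant $c$, the Riemann--von Mangoldt formula ensures that each $w$ lies within $r$ of only $O(1)$ zeros. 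Summing over $T/2 < \gamma \leq T$, interchanging the sum with the area integral, and invoking convexity to transfer the line of integration from the strip $|\Re(s)-1/2| \leq 3r/2$ to $\Re(s) = 1/2$, we obtain
\begin{equation*}
S \ll (\log T)^{3} \int_{T/2}^{T} |\zeta(\tfrac{1}{2}+it) \mathcal{N}(\tfrac{1}{2}+it, k-1)|^{2} dt.
\end{equation*}

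For the second stage it remains to establish the mollifier estimate $\int |\zeta\mathcal{N}(\cdot, k-1)|^{2} dt \ll T(\log T)^{k^{2}}$. Since $\mathcal{N}(\cdot, k-1)$ has length at most $T^{40 e^{2} k 10^{-M/4}}$, which is much smaller than $\sqrt{T}$ for $M$ large, the approximate functional equation for $\zeta(1/2+it)$ combined with Lemma \ref{Lem-MVDP} reduces the integral to a diagonal main term proportional to
\begin{equation*}
T \log T \cdot \prod_{p \in \bigcup_{j} I_{j}} \Big( \sum_{r_1, r_2 \geq 0} \frac{a_{k-1}(p^{r_1}) a_{k-1}(p^{r_2})}{p^{\max(r_1, r_2)}} \Big),
\end{equation*}
with negligible off-diagonal contribution due to the small length of $\mathcal{N}$. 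A direct expansion of the local factor gives $1 + (k^{2}-1)/p + O(p^{-2})$ via the identity $(k-1)^{2} + 2(k-1) = k^{2}-1$, whence Lemma \ref{RS} shows the Euler product to be of order $(\log T)^{k^{2}-1}$, and so the integral is $\ll T(\log T)^{k^{2}}$. The principal obstacle is precisely this Euler-factor computation: only the cancellation $(k-1)^{2} + 2(k-1) = k^{2}-1$ delivers the critical exponent $k^{2}$, reflecting the role of $\mathcal{N}(\cdot, k-1)$ as an approximate inverse mollifier of $\zeta^{1-k}$ over the prime intervals $I_j$; without it one would only obtain the looser $(\log T)^{(k-1)^{2}+2}$, which is insufficient when $k<1$.
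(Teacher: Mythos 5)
Your route is genuinely different from the paper's. The paper starts from Kirila's pointwise upper bound for $\log|\zeta'(\rho)|$ in terms of Dirichlet polynomials $G_{l,j}(\gamma)$, partitions the zeros into sets $\mathcal{S}(j)$ according to the sizes of those polynomials, disposes of the exceptional set $\mathcal{S}(0)$ by Cauchy--Schwarz and the measure bound \eqref{S0bound}, and then evaluates the remaining discrete moments via Landau's formula and a comparison with a random Euler product model. You instead propose to linearize $\zeta'(\rho)\mathcal{N}(\rho,k-1)$ by a Cauchy integral, pass to an area integral of $|\zeta\mathcal{N}(\cdot,k-1)|^2$ over a thin strip, and then evaluate a mollified second moment of $\zeta$. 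In spirit this is closer to Ng's treatment of $J_2$ than to the Harper--Kirila machinery the paper imports.

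There is, however, a genuine gap in the transition from the sum over zeros to the strip integral. After interchanging sum and integral you need the count $n(w)=\#\{\rho:|w-\rho|\le 3r/2\}$ to be $O(1)$ uniformly, and you appeal to the Riemann--von Mangoldt formula for this. But R--vM gives $N(t+\delta)-N(t)=\tfrac{\delta}{2\pi}\log t+O(\log t)$ unconditionally, and even under RH the error term is only $O(\log t/\log\log t)$ by Littlewood. With $r\asymp 1/\log T$ the main term is indeed $O(1)$, but the error term is not: clusters of up to $\gg\log T/\log\log T$ zeros in an interval of length $1/\log T$ cannot be ruled out. Plugging the honest bound $n(w)\ll\log T/\log\log T$ into your estimate yields $S\ll T(\log T)^{k^2+4}/\log\log T$, which overshoots the target $T(\log T)^{k^2+3}$ by a factor of $\log T/\log\log T$. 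Some additional input is needed here --- e.g.\ exploiting that $\zeta\mathcal{N}$ vanishes at each zero in the cluster, so that $|g|^2$ is correspondingly small where $n$ is large --- but this is precisely the nontrivial step and it is not addressed. (Separately, the ``invoking convexity to transfer the line of integration'' step, the $\sigma$-uniformity of the mean square over the thin strip, and the full mollified second-moment computation are all asserted rather than argued; the Euler-factor identity $(k-1)^2+2(k-1)=k^2-1$ you highlight is correct and is indeed the heart of the local computation, but the off-diagonal estimates via the approximate functional equation and Lemma \ref{Lem-MVDP} would still need to be carried out.) As it stands, the argument does not close.
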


  We shall prove the above propositions in the rest of the paper.

\subsection{Proof of Proposition \ref{Prop4}}
\label{sec 4.8}

  The proof follows largely the arguments in Section 5 of \cite{MN1}.  We begin by recalling a few results on the Riemann zeta function $\zeta(s)$. Notice that $\zeta(s)$ satisfies the functional equation (see \cite[Corollary 10.4]{MVa}):
\begin{align}
\label{fcneqn}
   \zeta(s) = \chi(s) \zeta(1-s),
\end{align}
 where
\[
   \chi(s) =  2^s\pi^{s-1} \Gamma(1-s) \sin (\pi s/2) \ .
\]

  Logarithmically differentiating the functional equation above implies that
\begin{align}
\label{logzetader}
   \frac{\zeta'}{\zeta}(1-s) = \frac{\chi'}{\chi}(s) - \frac{\zeta'}{\zeta}(s).
\end{align}

 Moreover, we have (see \cite[(8)]{MN1}) uniformly for $-1 \leq \sigma \leq 2$ and $|t| \geq 1$,
\begin{align}
\label{chiderest}
  \frac{\chi'}{\chi}(\sigma+it)=\frac{\chi'}{\chi}(1-\sigma-it)=-\log \leg {|t|}{2\pi}+O(\frac 1{|t|}).
\end{align}

  Note that it follows from \cite[p. 108]{Da} that for every $t \geq 2$ and all nontrivial zeros $\rho =\half + i\gamma$ of $\zeta(s)$, there exists a number $T$ satisfying $t \leq T \leq t + 1$ such that
\begin{align}
\label{Tcond}
 \frac{\zeta'}{\zeta}(\sigma+it) \ll (\log T)^2, \quad \text{for} -1 \leq \sigma \leq 2 \ \ \text{and} \ \ |\gamma-T| \gg (\log T)^{-1}.
\end{align}

  Note also that by differentiating the functional equation \eqref{fcneqn} above, we have
\begin{align}
\label{zetader}
  \zeta'(s) = - \chi(s) \left( \zeta'(1-s) - \frac{\chi'}{\chi}(s) \zeta(1-s) \right).
\end{align}
  We deduce from this that if we denote the left side expression in \eqref{L1estmation} by $S$, then
\begin{align*}
   S
   & = -\sum_{0 < \gamma \leq T} \chi(\rho) \zeta'(1-\rho) \mathcal{N}(\rho, k-1)\mathcal{N}(1-\rho, k) = \frac{1}{2 \pi i} \int_{\mathcal{C}} \frac{\zeta'}{\zeta}(1-s) \chi(s) \zeta'(1-s)  \mathcal{N}(s, k-1)\mathcal{N}(1-s, k) \, ds,
\end{align*}
where $\mathcal{C}$ is the positively oriented rectangle with vertices at $1-\kappa+i,\kappa+i,\kappa+iT,$ and $1-\kappa+iT$, and
$\kappa=1+(\log T)^{-1}$.  Here we may choose $T$ to satisfy the conditions in \eqref{Tcond}.   We then apply \eqref{Nnormbound}, \eqref{Tcond} and the estimations
\begin{equation*}
\begin{split}
   \chi(s)  \ll T^{1/2-\sigma} , \quad \zeta'(1-s)  \ll  T^{\sigma/2+\epsilon} \
\end{split}
\end{equation*}
  to see that the integral is bounded by $O(T^{1-\varepsilon})$ on the horizontal edges of the contour.

Next, we deduce from \eqref{zetader} and the functional equation \eqref{fcneqn} that
\[
  \chi(s) \zeta'(1-s) = - \zeta'(s) + \frac{\chi'}{\chi}(s) \zeta(s).
\]
  Combining this with \eqref{logzetader}, we see that the integral on the right edge of the contour equals
\begin{equation*}
%%\label{eq:SR0}
  S_{R} = \frac{1}{2 \pi i} \int_{\kappa+i}^{\kappa+iT} \left(
  \frac{\chi'}{\chi}(s)^{2} \zeta(s) - 2 \frac{\chi'}{\chi}(s) \zeta'(s)
  + \frac{\zeta'}{\zeta}(s) \zeta'(s)
  \right) \mathcal{N}(s, k-1)\mathcal{N}(1-s, k) \, ds.
\end{equation*}
  Also, the integral on the left edge of the contour equals
\[
  S_{L} = \frac{1}{2 \pi i} \int_{1-\kappa+iT}^{1-\kappa+i} \frac{\zeta'}{\zeta}(1-s) \chi(s) \zeta'(1-s)\mathcal{N}(s, k-1)\mathcal{N}(1-s, k) \, ds.
\]
 we make a change of variable $s \to 1-s$ to see that $S_L=-\overline{I}_{L}$, where
\[
   I_{L} = \frac{1}{2 \pi i} \int_{\kappa+i}^{\kappa+iT}  \chi(1-s) \frac{\zeta'}{\zeta}(s) \zeta'(s) \mathcal{N}(1-s, k-1)\mathcal{N}(s, k)
   \, ds.
\]

  We then conclude that
\[
   S = S_{R}- \overline{I}_{L} + O(T^{1-\varepsilon}).
\]

  We now apply  \eqref{Nnormbound}, \eqref{chiderest} and the bounds (see \cite[Corollary 1.17, Theorem 6.7]{MVa}) that when $s=\kappa+it$,
\begin{align*}
%%\label{eq:SR0}
  \zeta(s) \ll & (1+(|t|+4)^{1-\kappa})\min (\frac 1{|\kappa-1|}, \log (|t|+4))+\frac 1{s-1}+O(1), \\
  \frac{\zeta'}{\zeta}(s)  \ll & \log (|t|+4)-\frac 1{s-1}+O(1),
\end{align*}
  to see that
\begin{align}
\label{SRint}
  S_{R} = \frac{1}{2 \pi i} \int_{\kappa+i}^{\kappa+iT} \left(
  \log^2 \left( \frac{t}{2\pi } \right ) \zeta(s) +2\log \left( \frac{t}{2\pi } \right ) \zeta'(s)
  + \frac{\zeta'}{\zeta}(s) \zeta'(s)
  \right) \mathcal{N}(s, k-1)\mathcal{N}(1-s, k) \, ds+O(T^{1-\varepsilon}).
\end{align}

  To evaluate $S_{R}$, we define the Dirichlet convolution $f*g$ for two arithmetic functions $f(k), g(k)$ by
\begin{align*}
%%\label{eq:SR0}
  f*g(k)=\sum_{mn=k}f(m)g(n).
\end{align*}

   We then denote the integral given in \eqref{SRint} as a sum of three terms: $S_{R,1}, S_{R,2}$ and $S_{R,3}$, where
\begin{align*}
%%\label{eq:SR0}
  S_{R,1} =& \frac{1}{2 \pi i} \int_{\kappa+i}^{\kappa+iT}
  \log^2 \left( \frac{t}{2\pi } \right ) \zeta(s) \mathcal{N}(s, k-1)\mathcal{N}(1-s, k)\, ds,  \\
  S_{R,2} =& \frac{2}{2 \pi i} \int_{\kappa+i}^{\kappa+iT}\log \left( \frac{t}{2\pi } \right ) \zeta'(s)\mathcal{N}(s, k-1)\mathcal{N}(1-s, k) \, ds, \\
  S_{R,3} =& \frac{1}{2 \pi i} \int_{\kappa+i}^{\kappa+iT} \frac{\zeta'}{\zeta}(s) \zeta'(s)\mathcal{N}(s, k-1)\mathcal{N}(1-s, k) \, ds.
\end{align*}

  We use the notation given in \eqref{Nskexpression} and apply Lemma \ref{Lem-MVDP} to evaluate $S_{R,1}$ to obtain that
\begin{align*}
%%\label{eq:SR0}
  S_{R,1} =&  \left ( \frac{1}{2 \pi} \int_{1}^{T} \left(
  \log^2  \frac{t}{2\pi } \right )dt \right )\sum_{n}\frac {1 * a_{k-1}(n) \cdot a_k(n)}{n}\\
  &+ \left ( \left (\log^2 T +\int^T_1| (\log^2  \frac{t}{2\pi })'|dt \right ) \left (\sum^{\infty}_{n=1} \frac {(1 * a_{k-1})(n)^2}{n^{2\kappa-1}}\right )^{\half}\left (\sum^{\infty}_{n=1} \frac {a_{k}(n)^2}{n^{1-2\kappa}}\right )^{\half}\right ).
\end{align*}

   Similarly, we have
\begin{align*}
%%\label{eq:SR0}
  S_{R,2} =&  -2\left ( \frac{1}{2 \pi} \int_{1}^{T} \left(
  \log \frac{t}{2\pi } \right )dt \right )\sum_{n}\frac {\log * a_{k-1}(n) \cdot a_k(n)}{n}\\
&+ \left ( \left (\log T +\int^T_1| (\log  \frac{t}{2\pi })'|dt \right ) \left (\sum^{\infty}_{n=1} \frac {(\log * a_{k-1})(n)^2}{n^{2\kappa-1}}\right )^{\half}\left (\sum^{\infty}_{n=1} \frac {a_{k}(n)^2}{n^{1-2\kappa}}\right )^{\half}\right ).
\end{align*}

   Also,
\begin{align*}
%%\label{eq:SR0}
  S_{R,3} =&  \left ( \frac{1}{2 \pi} \int_{1}^{T} 1 dt \right )\sum_{n}\frac {(\Lambda*\log)* a_{k-1}(n) \cdot a_k(n)}{n}+ \left ( \left (\sum^{\infty}_{n=1} \frac {(\Lambda*\log)* a_{k-1}(n)^2}{n^{2\kappa-1}}\right )^{\half}\left (\sum^{\infty}_{n=1} \frac {a_{k}(n)^2}{n^{1-2\kappa}}\right )^{\half}\right ).
\end{align*}

  We now apply the estimations given in \eqref{anbound} to see that for $T$ large enough,
\begin{align*}
 \sum^{\infty}_{n=1} \frac {a_{k}(n)^2}{n^{1-2\kappa}} \ll e^{4k\log T/\log \log T}\sum_{n \leq T^{40 e^2k10^{-M/4}}} \frac {1}{n^{1-2\kappa}}\ll T^{1-\varepsilon}.
\end{align*}

  Moreover, using the estimation $(\Lambda*\log)(n) \leq \log n \sum_{d|n}\Lambda(d) =\log^2n$, we see that
\begin{align}
\label{convbounds}
\begin{split}
 & 1* a_{k-1}(n) \leq \sum_{n \leq T^{40 e^2k10^{-M/4}}}a_{k-1}(n) \leq T^{1/2-\varepsilon}, \\
 & (\log)* a_{k-1}(n) \leq \log n \sum_{n \leq T^{40 e^2k10^{-M/4}}}a_{k-1}(n) \leq T^{1/2-\varepsilon}\log n, \\
 & (\Lambda*\log)* a_{k-1}(n) \leq  \log^2 n \sum_{n \leq T^{40 e^2k10^{-M/4}}}a_{k-1}(n) \leq T^{1/2-\varepsilon}\log^2 n.
\end{split}
\end{align}
  It follows that
\begin{align*}
%%\label{eq:SR0}
  \sum^{\infty}_{n=1} \frac {1* a_{k-1}(n)^2+(\log)* a_{k-1}(n)^2+(\Lambda*\log)* a_{k-1}(n)^2}{n^{2\kappa-1}} \ll T^{1-2\varepsilon}\sum^{\infty}_{n=1} \frac {\log^4 n}{n^{2\kappa-1}} \ll T^{1-\varepsilon},
\end{align*}
  where the last estimation above follows from the bound that (see \cite[(16)]{MN1}) uniformly for $\sigma>1$ and any integer $i \geq 0$,
\begin{align}
\label{lognbound}
  \sum^{\infty}_{n=1} \frac {\log^i n}{n^{\sigma}} \ll \frac 1{(\sigma-1)^{i+1}}.
\end{align}

   We apply the above estimations in the evaluations of $S_{R,1}, S_{R,2}$ and $S_{R,3}$ to see that the contributions from the error terms can be ignored. Furthermore, in the evaluation of $S_{R,1}$, we see that, for a monic polynomial $\mathcal{Q}_2$ of degree $2$,
\begin{align*}
%%\label{eq:SR0}
   \frac{1}{2 \pi} \int_{1}^{T} \left(
  \log^2  \frac{t}{2\pi } \right )dt = \frac{T}{2\pi }\mathcal{Q}_2(\mathcal{L})+O(1),
\end{align*}
    where we denote $\cL=\log (T/(2\pi))$.

   Now, error term above contributes to an negligible error term since by \eqref{anbound} and \eqref{convbounds}, we have
\begin{align*}
  \sum_{n}\frac {1 * a_{k-1}(n) \cdot a_k(n)}{n} \ll T^{1/2-\varepsilon}\sum_{n \leq T^{40 e^2k10^{-M/4}}}\frac {1}{n} \ll T^{1-\varepsilon}.
\end{align*}

  We treat the integrals in the expressions for $S_{R,2}$ and $S_{R,3}$ similarly to arrive that, for monic polynomials $\mathcal{Q}_i, i=1,2$ of degree $i$,
\begin{align}
\label{SRexp}
  S_{R} =&  \frac{T}{2\pi }\sum_{n, m}\frac {a_{k-1}(m) a_k(mn)}{mn}\left ( \mathcal{Q}_2(\mathcal{L})-2\mathcal{Q}_1(\mathcal{L})(\log n)+((\Lambda*\log)(n)\right )+O(T^{1-\varepsilon}).
\end{align}

  Next, we evaluate $I_L$ using the notation given in \eqref{Nskexpression} to see that
\begin{align}
\label{IL}
   I_L=\sum_{m}\sum_{n}\frac {a_{k-1}(m) a_k(n)}{m} \sum_{k}(\Lambda * \log )(k) \left ( \frac{1}{2 \pi i} \int_{\kappa+i}^{\kappa+iT}  \chi(1-s) \leg{n k}{m}^{-s} \right )
   \, ds  \ .
\end{align}

   To evaluate the integral above, we need the following result from \cite[Lemma 5.2]{MN1}.
\begin{lemma} \label{stph} Let $r, \kappa_0 >0$. We have uniformly for $\kappa_0 \le \kappa \le 2$ that
\[
    \frac{1}{2 \pi i} \int_{\kappa+i}^{\kappa+iT} \chi(1-s) r^{-s} \, ds
    = I_{[0,T/ 2 \pi]} (r) e(r) + O\left (r^{-\kappa} \left(T^{\kappa-1/2}+ \frac{T^{\kappa+1/2}}{|T-2 \pi r|+T^{1/2}} \right )\right ).
\]
 where we write $e(x)=e^{2 \pi i x}$ and we denote $I_{[0,T/ 2 \pi]} (r)$ for  the indicator function of the
interval $[0, \frac T{2\pi}]$, namely, $I_{[0,T/ 2 \pi]} (r)=1$ if $r \in [0, \frac T{2\pi}]$ and $I_{[0,T/ 2 \pi]} (r)=0$
otherwise.
\end{lemma}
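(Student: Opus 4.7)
My plan is to prove this by the classical saddle-point method applied after inserting the Stirling asymptotic for $\chi(1-s)$. First I would parametrize $s=\kappa+it$ so that the integral takes the form
\[
\frac{1}{2\pi}\int_{1}^{T}\chi(1-\kappa-it)\,r^{-\kappa-it}\,dt.
\]
On this line, standard Stirling asymptotics yield, uniformly for $\kappa\in[\kappa_{0},2]$ and $t\ge 1$,
\[
\chi(1-\kappa-it)=\left(\frac{t}{2\pi}\right)^{\kappa-1/2}\exp\bigl(it\log(t/(2\pi))-it-i\pi/4\bigr)\bigl(1+O(1/t)\bigr).
\]
Substituting, the integrand becomes $r^{-\kappa}(t/(2\pi))^{\kappa-1/2}e^{iF(t)}(1+O(1/t))$, where
\[
F(t)=t\log\bigl(t/(2\pi r)\bigr)-t-\pi/4,\qquad F'(t)=\log\bigl(t/(2\pi r)\bigr),\qquad F''(t)=1/t,
\]
so there is a unique stationary point at $t_{0}=2\pi r$ with $F''(t_{0})=1/(2\pi r)>0$.

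Next I would split the analysis according to whether $t_{0}$ lies in $[1,T]$. If $t_{0}\notin[1,T]$, i.e.\ $r\notin[1/(2\pi),T/(2\pi)]$, then $|F'(t)|$ is bounded away from zero throughout $[1,T]$ and repeated integration by parts on $e^{iF(t)}$ produces
\[
\int_{1}^{T}(t/(2\pi))^{\kappa-1/2}e^{iF(t)}\,dt \ll \frac{T^{\kappa+1/2}}{|T-2\pi r|+T^{1/2}},
\]
the $T^{1/2}$ regularization arising from $|F'(T)|\gg |T-2\pi r|/T$ in the transition window near $r=T/(2\pi)$. Multiplied by $r^{-\kappa}$, this accounts for the second summand of the error term. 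If instead $t_{0}\in[1,T]$, I would split further into the saddle window $|t-t_{0}|\le c\sqrt{t_{0}}$ and its complement; the complement is handled by the same integration by parts, while inside the window I would Taylor-expand $F$ to quadratic order and evaluate the Gauss integral
\[
\int_{-\infty}^{\infty}\exp\bigl(i(t-t_{0})^{2}/(2t_{0})\bigr)\,dt=\sqrt{2\pi t_{0}}\,e^{i\pi/4},
\]
with tails and the cubic Taylor remainder absorbed into the error. Plugging in the amplitude $(t_{0}/(2\pi))^{\kappa-1/2}r^{-\kappa}=r^{-1/2}$ and $e^{iF(t_{0})}=e^{-2\pi ir-i\pi/4}$ then recovers (up to the sign convention in $e(\cdot)$) the stated main term $I_{[0,T/2\pi]}(r)\,e(r)$.

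The Stirling remainder $O(1/t)$ in the amplitude contributes an additional
\[
\ll r^{-\kappa}\int_{1}^{T}t^{\kappa-3/2}\,dt\ll r^{-\kappa}T^{\kappa-1/2},
\]
accounting for the first summand of the error. The hard part will be the uniform handling of the transition region where $r$ lies within $O(\sqrt{T})$ of $T/(2\pi)$: there the saddle sits close to the upper endpoint of integration, so the half-Fresnel contribution near $t=T$ and the endpoint terms from integration by parts must be balanced simultaneously. I would handle this by the standard Fresnel tail estimate $\int_{a}^{\infty}e^{iu^{2}}\,du\ll 1/(1+|a|)$, which manufactures precisely the $T^{1/2}$ cutoff in the denominator of the error term and makes the bound uniform in $r>0$ and in $\kappa\in[\kappa_{0},2]$.
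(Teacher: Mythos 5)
The paper does not prove this lemma at all: it is quoted verbatim from \cite[Lemma 5.2]{MN1}, so there is no in-text argument to compare against. That said, your saddle-point plan is the right one and matches what underlies the result in the literature (it descends from Gonek's stationary-phase evaluation of $\int\chi(1-s)r^{-s}\,ds$, used also by Conrey--Ghosh--Gonek). Your computation of the phase $F(t)=t\log\bigl(t/(2\pi r)\bigr)-t-\pi/4$, the stationary point $t_0=2\pi r$, the amplitude reducing to $r^{-1/2}$, and the two error mechanisms (Stirling remainder producing $r^{-\kappa}T^{\kappa-1/2}$, endpoint/integration-by-parts with Fresnel regularization producing $r^{-\kappa}T^{\kappa+1/2}/(|T-2\pi r|+T^{1/2})$) are all on target, and the careful point you single out --- uniformity when $2\pi r$ is within $O(\sqrt T)$ of the endpoint $T$ --- is indeed where the $T^{1/2}$ in the denominator must be produced, exactly as you describe.

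One thing you should not wave away as a ``sign convention'': your calculation gives $e^{iF(t_0)}\cdot e^{i\pi/4}=e^{-2\pi i r}=e(-r)$, i.e.\ the main term is $I_{[0,T/2\pi]}(r)\,e(-r)$, not $e(r)$. You flagged this as a convention mismatch, but it is in fact a typo in the statement as reproduced here: look at how the lemma is applied a few lines later, where the paper writes the resulting main term as $\sum_{1\le k\le mT/(2\pi n)}e(-kn/m)$, i.e.\ with $e(-r)$ after substituting $r=nk/m$. So your arithmetic is right and the statement has the sign wrong. Beyond that, to turn your sketch into a complete proof you would need to (i) control the contribution of the Stirling remainder for $\kappa$ near $1/2$, where $\int_1^T t^{\kappa-3/2}\,dt$ is only $O(\log T)$ rather than $O(1)$ --- harmless here since the application takes $\kappa=1+(\log T)^{-1}$, but relevant for uniformity in $\kappa_0$ --- and (ii) make the ``tails and cubic Taylor remainder absorbed into the error'' step explicit via a second-derivative test or an explicit Fresnel bound; but these are routine and your outline is sound.
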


  We apply Lemma \ref{stph} to see that the contribution from the error term in \eqref{IL} is
\begin{align*}
   \ll \sum_{m}\sum_{n}\frac {a_{k-1}(m) a_k(n)}{m} \sum_{k}(\Lambda * \log )(k) \leg{n k}{m}^{-\kappa} \left(T^{\kappa-1/2}+ \frac{T^{\kappa+1/2}}{|T-2 \pi \leg{n k}{m}|+T^{1/2}} \right ).
\end{align*}

   Using \eqref{lognbound}, we see that
\begin{align*}
%%\label{eq:SR0}
  \sum^{\infty}_{k=1} \frac {(\Lambda * \log )(k) }{k^{\kappa}} \leq  \sum^{\infty}_{k=1} \frac {\log^2(k) }{k^{\kappa}} \ll \frac 1{(\kappa-1)^3}.
\end{align*}

   We deduce from this and \eqref{anbound} that
\begin{align*}
   \ll \sum_{m}\sum_{n}\frac {a_{k-1}(m) a_k(n)}{m} \sum_{k}(\Lambda * \log )(k) \leg{n k}{m}^{-\kappa} T^{\kappa-1/2} =O(T^{1-\varepsilon}).
\end{align*}

   We now use the ideas in the proof of \cite[Lemma 2]{CGG} to estimate
\begin{align*}
   \sum_{m}\sum_{n}\frac {a_{k-1}(m) a_k(n)}{m} \sum_{k}(\Lambda * \log )(k) \leg{n k}{m}^{-\kappa} \frac{T^{\kappa+1/2}}{|T-2 \pi \leg{n k}{m}|+T^{1/2}}.
\end{align*}
  We break up the sum into three parts. The terms with $|T-2 \pi \leg{n k}{m}| >\half T$ contribute $O(T^{1-\varepsilon})$ as our discussions above. The terms $T^{\half} \leq |T-2 \pi \leg{n k}{m}| \leq \half T$ are further divided into cases that $T^{\half} \leq T-2 \pi \leg{n k}{m} \leq \half T$ and  $T^{\half} \leq 2 \pi \leg{n k}{m}-T \leq \half T$. Without loss of generality, we consider the case that $T^{\half} \leq 2 \pi \leg{n k}{m}-T \leq \half T$. This implies that $T+T^{\half}  \leq 2 \pi \leg{n k}{m}\leq T+\half T $. We then split the sum into $\ll \log T$ sums of the shape
\begin{align*}
   T+P  \leq 2 \pi \leg{n k}{m}\leq T+2P
\end{align*}
  where $T^{\half} \ll P \ll T$. The above implies that
\begin{align}
\label{nkbound}
  (nk)^{-\kappa} \ll (mT)^{-\kappa}.
\end{align}
   Moreover, the sum over $n, k$ ranges over an interval of length $\ll mP$, Thus, the contribution form the corresponding terms (using $a_k(n) \ll T^{\varepsilon}$, $(\Lambda * \log )(k) \ll \log^2 k \ll \log^2 (m(T+P)) \ll T^{\varepsilon}$, $1/(|T-2 \pi \leg{n k}{m}|+T^{1/2}) \ll 1/P$) is
\begin{align*}
   T^{\varepsilon}\sum_{m}\sum_{n}\frac {a_{k-1}(m)}{m} mP (mT)^{-\kappa} \frac{T^{\kappa+1/2}}{P} \ll T^{1-\varepsilon}.
\end{align*}

   Lastly, we consider the contributions from the terms $|T-2 \pi \leg{n k}{m}| \leq T^{\half}$ by noticing that in this case the estimation \eqref{nkbound} is still valid. Moreover, the sum over $n, k$ ranges over an interval of length $\ll mT^{\half}$, Thus, the contribution form the corresponding terms is
\begin{align*}
   \ll T^{\varepsilon}\sum_{m}\sum_{n}\frac {a_{k-1}(m)}{m} mT^{\half} (mT)^{-\kappa} \frac{T^{\kappa+1/2}}{T^{\half}} \ll T^{1-\varepsilon}.
\end{align*}

   We then conclude from the above discussions that
\begin{align*}
%%\label{IL1}
   I_L=\sum_{m}\sum_{n}\frac {a_{k-1}(m) a_k(n)}{m} \sum_{1 \leq k \leq \frac {mT}{2\pi n}}e(-\frac {kn}{m})+O(T^{1-\varepsilon}).
\end{align*}

   We now proceed as in \cite[p. 3212-3213]{MN1} to see that, for a monic polynomial $\mathcal{P}_2$ of degree $2$,
\begin{align}
\label{ILexp}
\begin{split}
   I_{L} =&  \frac{T}{4\pi }\sum_{n,m}\frac {a_{k-1}(m) a_k(mn)}{mn}\mathcal{P}_2(\log \frac {T}{2\pi n})+O(T^{1-\varepsilon})+O(\sum_{n,m}\frac {a_{k-1}(m) a_k(n)}{mn})(m,n)\Big ( \Lambda_2(\frac {m}{(m,n)})+\Lambda(\frac {m}{(m,n)})\log T \Big ).
\end{split}
\end{align}

   To estimate the last error term in \eqref{ILexp}, we write $d=(m,n), m=dL, n=dN$ and use the easily checked property that $a_{k-1}(mn) \leq a_{k-1}(m)a_{k-1}(n)$ to see that it is
\begin{align*}
   \ll \sum_{d,N}\frac {a_{k-1}(d) a_k(dN)}{dN}\sum_{L}\frac {a_{k-1}(L)}{L}\Big ( \Lambda_2(L)+\Lambda(L)\log T \Big ).
\end{align*}

   Using \eqref{Lambda2} and the observation from \eqref{anbound} that $a_{k-1}(L)$ is bounded when $L$ is supported on integers $L$ with $\omega(L) \leq 2$ and $a_{k-1}(L) \neq 0$ only for $L \leq T^{40 e^2k10^{-M/4}}$, we proceed as in \cite[p. 3213]{MN1} to see  that
\begin{align*}
   \sum_{L}\frac {a_{k-1}(L)}{L}\Big ( \Lambda_2(L)+\Lambda(L)\log T \Big ) \ll 10^{-M/4}\log^2 T.
\end{align*}

   We conclude from \eqref{SRexp}, \eqref{ILexp} and the above estimation that
\begin{align*}
  S \geq &  \frac{T}{2\pi }\sum_{n,m}\frac {a_{k-1}(m) a_k(mn)}{mn}\left ( \mathcal{Q}_2(\mathcal{L})-2\mathcal{Q}_1(\mathcal{L})(\log n)-\half \mathcal{P}_2(\mathcal{L}-\log n)+((\Lambda*\log)(n)\right ) \\
  &+O(10^{-M/4}T\log^2 T\sum_{n,m}\frac {a_{k-1}(m) a_k(mn)}{mn})+O(T^{1-\varepsilon}).
\end{align*}

   We now take $M$ large enough and argue as in \cite[p. 3214]{MN1} to see that
\begin{align*}
  S \gg & T\log^2 T \sum_{n,m}\frac {a_{k-1}(m) a_k(mn)}{mn}.
\end{align*}

   Lastly, we apply arguments used in \cite[Section 4]{Gao2021-4} to estimate the sums above to arrive that
\begin{align*}
  S \gg & T(\log T)^{k^2+2} .
\end{align*}
   This completes the proof of the proposition.

\subsection{Proof of Proposition \ref{Prop5}}

   Upon dividing the range of $\gamma$ into dyadic blocks and replacing $T$ by $2T$, we see that it suffices to show for large $T$,
\begin{align}
\label{sumprodNQ}
\sum_{T<\gamma\leq 2T}\prod^{\mathcal{J}}_{j=1} \big ( |{\mathcal N}_j(\rho, k)|^2+ |{\mathcal Q}_j(\rho,k)|^{2r_k} \big ) \ll & T ( \log T  )^{k^2+1}.
\end{align}
    We apply Lemma \ref{Lem-Landau} to evaluate the left side expression above. In this process, we may ignore the contributions from the error terms in \eqref{sumgamma}, using arguments similar to our treatments on various error terms in the proof of Proposition \ref{Prop4}. Moreover, we may also ignore the contributions from the main terms in \eqref{sumgamma} from the cases $a \neq b$, since these terms are negative and we are seeking for an upper bound here. Thus, only the diagonal terms in the left side expression of \eqref{sumprodNQ} survive. Now applying \eqref{N2Tbound}, we conclude that
\begin{align*}
%%\label{sumprodNQdiag}
\begin{split}
 &\sum_{T<\gamma\leq 2T}\prod^{\mathcal{J}}_{j=1} \big ( |{\mathcal N}_j(\rho, k)|^2+ |{\mathcal Q}_j(\rho,k)|^{2r_k} \big ) \\
\ll & T ( \log T  )\prod^{\mathcal{J}}_{j=1} \Big (  \sum_{n_j} \frac{k^{2\Omega(n_j)}}{n_j g^2(n_j)}  b_j(n_j)  + \Big( \frac{64 \max (2, k+3/2 ) }{\lceil e^2k\alpha^{-3/4}_j \rceil} \Big)^{2r_k \lceil e^2k\alpha^{-3/4}_j \rceil}((r_k\lceil e^2k\alpha^{-3/4}_j \rceil)!)^2 \sum_{ \substack{ \Omega(n_j) = r_k\lceil e^2k\alpha^{-3/4}_j \rceil \\ p|n_j \implies  p\in I_j}} \frac{1 }{n_j g^2(n_j)} \Big ).
\end{split}
\end{align*}

  The product above is analogues to the product in \cite[(6.20)]{Gao2021-4}. Using similar estimations, we see that it is $\ll (\log T)^{k^2}$. This implies \eqref{sumprodNQ} and completes the proof of the proposition.

\subsection{Proof of Proposition \ref{Prop6}}
\label{sec: proof of Prop 6}

   Again by dividing the range of $\gamma$ into dyadic blocks and replacing $T$ by $2T$, we see that it suffices to show for large $T$,
\begin{align}
\label{zetaNestmation}
\sum_{T<\gamma\leq 2T}|\zeta'(\rho)|^{2} |\mathcal{N}(\rho, k-1)|^2   \ll T ( \log T  )^{k^2+3}.
\end{align}

    We split the interval $(0,T^{\alpha_{\mathcal{J}}}]$ into disjoint subintervals $I_j=(T^{\alpha_{j-1}},T^{\alpha_j}]$ for $1\le j \le \mathcal{J}$ and define
$$w_j(n)=\frac{\Lambda_{\cL}(n)}{n^{1/(\alpha_j\log T)}\log n}\frac{\log (T^{\alpha_j}/n)}{\log T^{\alpha_j}},$$
 where
\begin{align*}
  \Lambda_{\cL}(n) =
\begin{cases}
  \Lambda(n),  \quad \text{ if } n=p \text{ or if } n=p^2 \text{ and } n \leq \cL, \\
   0,  \quad \text{otherwise}.
\end{cases}
\end{align*}

 For $1 \leq l \leq j \leq \mathcal{J}$, we define
\[
G_{l,j}(t)=\Re\sum_{n\in I_l}\frac{w_j(n)}{\sqrt{n}}n^{-it}.
\]

  Note that our definition of $G_{l,j}$ is slightly different from that in \cite{Kirila}, due to our definition on $\cL$. However, we notice the bounds
\begin{align}
\label{wbounds}
\begin{split}
  w_j(p) \leq 1, \quad w_j(p^2) \leq \half, 
\end{split}
\end{align}
 so that it follows from the above and Lemma \ref{RS} that
$$\sum_{\cL \leq p \leq \log T}\frac{w_j(p^2)}{p} =O(1).$$
 We then deduce from \cite[(4.1)]{Kirila} the following upper bound for $\log |\zeta'(\rho)|$, which says that for any $1 \leq j \leq \mathcal{J}$,
\begin{align}
\label{basicest}
\begin{split}
 & \log |\zeta'(\rho)| \ll \sum^{j}_{l=1}G_{l,j}(\gamma)+\log \log T+ \alpha^{-1}_j+O(1).
\end{split}
 \end{align}

  We also define the following sets:
\begin{align*}
  \mathcal{S}(0) =& \{ T< \gamma \leq 2T : |G_{1,l}(\gamma)| > \alpha_{1}^{-3/4} \; \text{ for some } 1 \leq l \leq \mathcal{J} \} ,   \\
 \mathcal{S}(j) =& \{ T< \gamma \leq 2T : |  G_{m,l}(\gamma)| \leq
 \alpha_{m}^{-3/4} \; \forall 1 \leq m \leq j, \; \forall m \leq l \leq \mathcal{J}, \\
 & \;\;\;\;\; \text{but }  |G_{j+1,l}(\gamma)| > \alpha_{j+1}^{-3/4} \; \text{ for some } j+1 \leq l \leq \mathcal{J} \} ,  \quad  1\leq j \leq \mathcal{J}, \\
 \mathcal{S}(\mathcal{J}) =& \{T< \gamma \leq 2T : |G_{m,
\mathcal{J}}(\gamma)| \leq \alpha_{m}^{-3/4} \; \forall 1 \leq m \leq \mathcal{J}\},
\end{align*}
  so that
$$ \{ T< \gamma \leq 2T   \}=\bigcup_{j=0}^{ \mathcal{J}} \mathcal{S}(j). $$

  It follows that
\begin{align}
\label{sumovermj'}
  \sum_{T<\gamma\leq 2T}|\zeta'(\rho)|^{2} |\mathcal{N}(\rho, k-1)|^2 = \sum_{j=0}^{ \mathcal{J}}  \sum_{\gamma \in S(j)}|\zeta'(\rho)|^{2} |\mathcal{N}(\rho, k-1)|^2.
\end{align}

   We note from \cite[Lemma 5.5]{Kirila} and \eqref{N2Tbound} that we have
\begin{align}
\label{S0bound}
\text{meas}(\mathcal{S}(0)) \ll &
T(\log T)e^{-(\log\log T)^{2}/10}  .
\end{align}

   We then deduce via the Cauchy-Schwarz inequality that
\begin{align}
\label{LS0bound}
\begin{split}
& \sum_{\gamma \in S(0)}|\zeta'(\rho)|^{2} |\mathcal{N}(\rho, k-1)|^2
\leq  \Big ( \text{meas}(\mathcal{S}(0)) \Big )^{1/4} \Big (
\sum_{T<\gamma\leq 2T}|\zeta'(\rho)|^{8}  \Big )^{1/4} \Big ( \sum_{T<\gamma\leq 2T} |\mathcal{N}(\rho, k-1)|^4  \Big )^{1/2}.
\end{split}
\end{align}

  Similar to the proof of Proposition \ref{Prop5}, we have that
\begin{align}
\label{N2k2bound}
&  \sum_{T<\gamma\leq 2T} |\mathcal{N}(\rho, k-1)|^4 \ll T( \log T  )^{(2k-2))^2+1}.
\end{align}

  Also, applying \eqref{Jupperbound} with $k=4, \varepsilon=1$, we see that
\begin{align}
\label{L8bound}
\sum_{T<\gamma\leq 2T}|\zeta'(\rho)|^{8}  \leq T(\log T)^{25}.
\end{align}

  We use the bounds given in \eqref{S0bound}, \eqref{N2k2bound} and \eqref{L8bound} in \eqref{LS0bound} to conclude that
\begin{align}
\label{S0bound1}
\sum_{\gamma \in \mathcal{S}(0)}|\zeta'(\rho)|^{2} |\mathcal{N}(\rho, k-1)|^2   \ll T(\log T)^{k^2+3}.
\end{align}

  The above estimation implies that it remains to consider the cases $j \geq 1$ in \eqref{sumovermj'}. Without loss of generality, we may assume that $1 \leq j \leq \mathcal{J}-1$ here. When $\gamma \in \mathcal{S}(j)$, we deduce from \eqref{basicest} that
\begin{align*}
%%\label{Lkbound}
\begin{split}
 & \sum_{\gamma \in \mathcal{S}(j)}|\zeta'(\rho)|^{2}|\mathcal{N}(\rho, k-1)|^2 \ll (\log T)^{2} \exp \big(\frac {2}{\alpha_j} \big )\sum_{\gamma \in \mathcal{S}(j)} \exp \Big (
 2 \sum^{j}_{l=1} G_{l,j}(\gamma) \Big )|\mathcal{N}(\rho, k-1)|^2.
\end{split}
 \end{align*}

   As we have $G_{l, j} \leq  \alpha^{-3/4}_l$ when $\gamma \in \mathcal{S}(j)$, we apply \cite[Lemma 5.2]{Kirila} to see that
\begin{align*}
%%\label{eMbound}
\begin{split}
\exp \Big (
 2 \sum^{j}_{l=1} G_{l,j}(\gamma) \Big ) \ll
\prod^j_{l=1}E^2_{e^2k\alpha^{-3/4}_l}(G_{l,j}(\gamma)).
\end{split}
 \end{align*}

   We then deduce from the description on $\mathcal{S}(j)$ that when $j \geq 1$,
\begin{align}
\label{upperboundprodE0}
\begin{split}
 & \sum_{\gamma \in \mathcal{S}(j)}|\zeta'(\rho)|^{2}|\mathcal{N}(\rho, k-1)|^2   \\
 \ll &  (\log T)^2 \exp \big(\frac {2}{\alpha_j} \big )
 \sum^{ \mathcal{I}}_{m=j+1} \sum_{\gamma \in \mathcal{S}(j)} \exp \Big (
 2 \sum^{j}_{l=1} G_{l,j}(\gamma) \Big )|\mathcal{N}(\rho, k-1)|^2 \Big ( \alpha^{3/4}_{j+1}G_{j+1,m}(\gamma)\Big)^{2\lceil 1/(10\alpha_{j+1})\rceil } \\
\ll & (\log T)^2 \exp \big(\frac {2}{\alpha_j} \big )
 \sum^{ \mathcal{I}}_{m=j+1} \sum_{\gamma \in \mathcal{S}(j)}
\prod^j_{l=1} E^2_{e^2k\alpha^{-3/4}_l}(G_{l,j}(\gamma))|E_{e^2k\alpha^{-3/4}_l}((k-1){\mathcal P}_{l}(\rho)|^2 \\
& \times |E_{e^2k\alpha^{-3/4}_{j+1}}((k-1){\mathcal P}_{j+1}(\rho)|^2  \Big ( \alpha^{3/4}_{j+1}G_{j+1,m}(\gamma)\Big)^{2\lceil 1/(10\alpha_{j+1})\rceil }  \prod^{\mathcal{J}}_{n=j+2} |E_{e^2k\alpha^{-3/4}_n}((2k-2){\mathcal P}_{n}(\rho))|^2.
\end{split}
\end{align}

  Note that
\begin{align}
\label{upperboundprodE}
\begin{split}
& \sum_{\gamma \in \mathcal{S}(j)}
\prod^j_{l=1} E^2_{e^2k\alpha^{-3/4}_l}(G_{l,j}(\gamma))|E_{e^2k\alpha^{-3/4}_l}((k-1){\mathcal P}_{l}(\rho)|^2 \\
& \times |E_{e^2k\alpha^{-3/4}_{j+1}}((k-1){\mathcal P}_{j+1}(\rho)|^2  \Big ( \alpha^{3/4}_{j+1}G_{j+1,m}(\gamma)\Big)^{2\lceil 1/(10\alpha_{j+1})\rceil }  \prod^{\mathcal{J}}_{n=j+2} |E_{e^2k\alpha^{-3/4}_n}((2k-2){\mathcal P}_{n}(\rho))|^2 \\
\leq & \sum_{T< \gamma \leq 2T}
\prod^j_{l=1} E^2_{e^2k\alpha^{-3/4}_l}(G_{l,j}(\gamma))|E_{e^2k\alpha^{-3/4}_l}((k-1){\mathcal P}_{l}(\rho)|^2 \\
& \times |E_{e^2k\alpha^{-3/4}_{j+1}}((k-1){\mathcal P}_{j+1}(\rho)|^2  \Big ( \alpha^{3/4}_{j+1}G_{j+1,m}(\gamma)\Big)^{2\lceil 1/(10\alpha_{j+1})\rceil }  \prod^{\mathcal{J}}_{n=j+2} |E_{e^2k\alpha^{-3/4}_n}((2k-2){\mathcal P}_{n}(\rho))|^2.
\end{split}
\end{align}

  We shall apply Lemma \ref{Lem-Landau} to evaluate the last sum above. As in the case for the proof of Proposition \ref{Prop4}, we may only focus on the main term in the process.  To facilitate our evaluation of the last sum above, we follow the treatments in \cite{Kirila} by introducing a sequence of independent random variables  $\{X_p\}$ such that each $X_p$ is uniformly distributed on the unit circle in the complex plane. We also define
\[
X_n=X_{p_1}^{h_1}\cdots X_{p_r}^{h_r}
\]
for $n=p_1^{h_1}\cdots p_r^{h_r}$ so that $X_n$ is a random completely multiplicative function. We then define random models $G_{l,j}(X)$ for $1 \leq l \leq j \leq {\mathcal J}$ by
\[
G_{l,j}(X)=\Re \sum_{n\in I_l}\frac{w_j(n)}{\sqrt{n}}X_n, \quad {\mathcal P}_{j}(X)=\sum_{p \in I_j}\frac{1}{\sqrt{p}}X_p.
\]

 Similar to \cite[Lemma 5.3]{Kirila}, we have under RH and other than a negligible error term, for $1 \leq l \leq j \leq {\mathcal J}$ and non-negative integers $n_{l,1}, n_{l,2}, n_{l,3}$ satisfying $\max ( n_{l,1}/2, n_{l,2}, n_{l,3}) \leq  \max (\lceil e^2k\alpha^{-3/4}_l \rceil, 2\lceil 1/(10\alpha_{j+1})\rceil) $,
\begin{align*}
%%\label{LboundinSP1}
\begin{split}
 & \sum_{T< \gamma \leq 2T}\prod^{{\mathcal J}}_{l=1}G^{n_{l,1}}_{l,j}(\gamma){\mathcal P}^{n_{l,2}}_{l}(\rho){\mathcal P}^{n_{l,3}}_{l}(\overline{\rho})
 \leq N(T, 2T) \E \Big ( \prod^{{\mathcal J}}_{l=1}G^{n_{l,1}}_{l,j}(X){\mathcal P}^{n_{l,2}}_{l}(X){\mathcal P}^{n_{l,3}}_{l}(\overline{X})  \Big ).
\end{split}
\end{align*}

  We now proceed as in the proof of \cite[Lemma 5.5]{Kirila} to see that, under RH and other than a negligible error term, the last sum in \eqref{upperboundprodE} is
\begin{align*}
%%\label{upperboundprodE1}
\begin{split}
  \ll & \Big ( \alpha^{3/4}_{j+1}\Big)^{2\lceil 1/(10\alpha_{j+1})\rceil } N(T, 2T)  \E \Big ( G_{j+1,m}(X)^{2\lceil 1/(10\alpha_{j+1})\rceil } \exp \Big ( 2\sum^{j}_{l=1}G_{l,j}(X)+2(k-1)\sum^{{\mathcal J}}_{n=1}\Re {\mathcal P}_{n}(X) \Big ) \Big ) \\
=&  \Big ( \alpha^{3/4}_{j+1}\Big)^{2\lceil 1/(10\alpha_{j+1})\rceil } N(T, 2T)  \E \Big ( \exp \Big ( 2G_{1,j}(X)+2(k-1)\Re {\mathcal P}_{1}(X) \Big ) \Big ) \prod^{j}_{l=2}\E \Big ( \exp \Big ( 2 G_{l,j}(X)+2(k-1)\Re {\mathcal P}_{l}(X) \Big ) \Big ) \\
& \times \E \Big ( G_{j+1,m}(X)^{2\lceil 1/(10\alpha_{j+1})\rceil } \exp \Big ( 2(k-1)\Re {\mathcal P}_{j+1}(X)  \Big )\Big ) \times \prod^{{\mathcal J}}_{l=j+2}\E \Big ( \exp \Big ( 2(k-1)\Re {\mathcal P}_{l}(X)  \Big ).
\end{split}
\end{align*}

  Similar to the evaluation done on \cite[p. 492]{Kirila}, we see that
\begin{align}
\label{upperboundprodE2}
\begin{split}
  \prod^{j}_{l=2}\E \Big ( \exp \Big ( 2 G_{l,j}(X)+2(k-1)\Re {\mathcal P}_{l}(X) \Big ) \Big ) =& \prod^{j}_{l=2}
 \prod_{p \in I_l}I_0\big (\frac {2w_j(p)}{\sqrt{p}}+\frac {2(k-1)}{\sqrt{p}} \big ), \\
\prod^{{\mathcal J}}_{l=j+2}\E \Big ( \exp \Big ( 2(k-1)\Re {\mathcal P}_{l}(X) \Big ) \Big ) =& \prod^{{\mathcal J}}_{l=j+2}
 \prod_{p \in I_l}I_0\big (2(k-1)\frac 1{\sqrt{p}} \big ),
\end{split}
\end{align}
  where $I_0(z)=\sum_{n=0}^{\infty}\frac{(z/2)^{2n}}{(n!)^2}$ is the modified Bessel function of the first kind.

  Next, using the arguments similar to those in \cite[(6.3)]{Kirila}, we have that
\begin{align*}
%%\label{upperboundprodE3}
\begin{split}
 & \E \Big ( \exp \Big ( 2 G_{1,j}(X)+2(k-1)\Re {\mathcal P}_{1}(X) \Big ) \Big )\\
 =&
 \prod_{\substack{p \in I_1 \\ p>\log T}}I_0\big (\frac {2w_j(p)}{\sqrt{p}}+\frac {2(k-1)}{\sqrt{p}} \big ) \\
& \times \E \Big ( \exp \Big ( \sum_{p \leq \log T}\big(\frac {2w_j(p)}{\sqrt{p}}+  \frac {2(k-1)}{\sqrt{p}}\big )\Re X_p+\sum_{p \leq \log T} \frac {4 w_j(p^2)}{\sqrt{p}}(\Re X_p^2)- \sum_{p \leq \log T}\frac {4 w_j(p^2)}{\sqrt{p}} \Big )\Big ).
\end{split}
\end{align*}

  We apply the bounds given in \eqref{wbounds} and proceed as in \cite[p. 492-493]{Kirila} to deduce that the last expression above is
\begin{align*}
%%\label{upperboundprodE4}
\begin{split}
  \ll \prod_{\substack{p \in I_1}}I_0\big (\frac {2w_j(p)}{\sqrt{p}}+\frac {2(k-1)}{\sqrt{p}} \big ).
\end{split}
\end{align*}

 Furthermore, we notice that
\begin{align}
\label{upperboundprodE5}
\begin{split}
 & \E \Big ( G_{j+1,m}(X)^{2\lceil 1/(10\alpha_{j+1})\rceil } \exp \Big ( 2(k-1)\Re {\mathcal P}_{j+1}(X)  \Big )\Big ) \\
=& \sum^{\infty}_{n=0}\E \Big ( G_{j+1,m}(X)^{2\lceil 1/(10\alpha_{j+1})\rceil } \frac { (2(k-1)\Re {\mathcal P}_{j+1}(X))^n}{n!} \Big ) \\
=& \sum^{\infty}_{n=0}\E \Big ( G_{j+1,m}(X)^{2\lceil 1/(10\alpha_{j+1})\rceil } \frac { (2(k-1)\Re {\mathcal P}_{j+1}(X))^{2n}}{(2n)!} \Big ) \\
\leq & \sum^{\infty}_{n=0} \E \Big (  \frac { (2\Re {\mathcal P}_{j+1}(X))^{2\lceil 1/(10\alpha_{j+1})\rceil+ 2n}}{(2n)!} \Big ),
\end{split}
\end{align}
 where the last estimation above follows from the observation that, upon using \eqref{wbounds} and noting that $0<k \leq 1/2$,  we have for any integer $n \geq 0$,
\begin{align}
\label{upperboundprodE6}
\begin{split}
& \E \Big ( G_{j+1,m}(X)^{2\lceil 1/(10\alpha_{j+1})\rceil } \frac { (2(k-1)\Re {\mathcal P}_{j+1}(X))^{2n}}{(2n)!} \Big )
\leq  \ \E \Big (  \frac { (2\Re {\mathcal P}_{j+1}(X))^{2\lceil 1/(10\alpha_{j+1})\rceil+ 2n}}{(2n)!} \Big ).
\end{split}
\end{align}

  Observe that for any positive integer $m$,
\[
\E[(\Re X_p)^{m}]=
\begin{cases}
\displaystyle \binom{2h}{h}2^{-m}&\hbox{if } m=2h,\\
\displaystyle 0 &\hbox{if } m \text{ is odd}.
\end{cases}
\]

 It follows from this that if we denote $\{ p | p \in I_{j+1}\}=\{ p_1, \cdots, p_s \}$, then we have for any positive integer $m$,
\begin{align*}
%%\label{upperboundprodE7}
\begin{split}
 \E \Big (  (2\Re {\mathcal P}_{j+1}(X))^{2m} \Big ) =&  \sum_{\substack{m_1, m_2 \cdots, m_s \geq 0 \\ m_1+m_2+\cdots+m_s =m}}\binom {2m}{2m_1, \cdots, 2m_s}\prod^s_{i=1}(\frac{2}{\sqrt{p_i}})^{2m_i} \E(\Re X_p)^{2m_i} \\
  =&  2^{-2m}\sum_{\substack{m_1, m_2 \cdots, m_s \geq 0 \\ m_1+m_2+\cdots+m_s =m}}\binom {2m}{2m_1, \cdots, 2m_s}\prod^s_{i=1}\binom {2m_i}{m_i}(\frac{4}{p_i})^{m_i} \\
\leq & \frac {(2m)!}{2^{2m}m! }(\sum_{p \in I_{j+1}}\frac{4}{p})^{m}.
\end{split}
\end{align*}

   We apply the above estimation to the last expression in \eqref{upperboundprodE6} to deduce that
\begin{align}
\label{upperboundprodE8}
\begin{split}
& \E \Big ( G_{j+1,m}(X)^{2\lceil 1/(10\alpha_{j+1})\rceil } \frac { (2(k-1)\Re {\mathcal P}_{j+1}(X))^{2n}}{(2n)!} \Big ) \\
 \leq & \  \frac{(2\lceil 1/(10\alpha_{j+1})\rceil+ 2n)!}{2^{2\lceil 1/(10\alpha_{j+1})\rceil+ 2n}(\lceil 1/(10\alpha_{j+1})\rceil+ n)!(2n)!}\bigg(\sum_{p\in I_{j+1}}\frac{4}{p}\bigg)^{\lceil1/10\alpha_{j+1}\rceil +n}.
\end{split}
\end{align}

 Using the bounds
\begin{align}
\label{Stirling}
  (\frac ne)^n \leq n! \leq n(\frac ne)^n,
\end{align}
  we see that the last expression in \eqref{upperboundprodE8} is
\begin{align}
\label{upperboundprodE9}
\begin{split}
\ll \frac {(2\lceil 1/(10\alpha_{j+1})\rceil+ 2n)}{(2n)!} \bigg(\frac {\lceil 1/(10\alpha_{j+1})\rceil+ n}{e}\sum_{T^{\alpha_j}<p\le T^{\alpha_{j+1}}}\frac{4}{p}\bigg)^{\lceil 1/(10\alpha_{j+1})\rceil+ n}.
\end{split}
\end{align}

   Notice that when $n \geq 2\lceil 1/(10\alpha_{j+1})\rceil$, the expression above is
\begin{align*}
%%\label{upperboundprodE10}
\begin{split}
\leq \frac {3n}{(2n)!} \bigg(\frac {3n}{2e}\sum_{T^{\alpha_j}<p\le T^{\alpha_{j+1}}}\frac{4}{p}\bigg)^{3n/2} \leq 3n (\frac {e}{2n} )^{2n} \bigg(\frac {60n}{e}\bigg)^{3n/2},
\end{split}
\end{align*}
  where the last estimation above follows from \eqref{sumpj} and \eqref{Stirling}. As the sum of the last term above over $n$ is convergent, we see that the contribution of these terms to the last sum of \eqref{upperboundprodE5} is $O(1)$ and may be ignored.

   Now for $\frac 14 \lceil 1/(10\alpha_{j+1})\rceil \leq n < 2\lceil 1/(10\alpha_{j+1})\rceil$, we apply \eqref{sumpj} to see that the expression in \eqref{upperboundprodE9} is
\begin{align*}
%%\label{upperboundprodE11}
\begin{split}
\leq & \frac {6\lceil 1/(10\alpha_{j+1})\rceil}{(2n)!} \bigg(\frac {3\lceil 1/(10\alpha_{j+1})\rceil}{e}\sum_{T^{\alpha_j}<p\le T^{\alpha_{j+1}}}\frac{4}{p}\bigg)^{\lceil 1/(10\alpha_{j+1})\rceil} \cdot \bigg(\frac {5n}{e}\sum_{T^{\alpha_j}<p\le T^{\alpha_{j+1}}}\frac{4}{p}\bigg)^{n} \\
\leq & \frac {6\lceil 1/(10\alpha_{j+1})\rceil}{(2n)!} \bigg(\frac {120\lceil 1/(10\alpha_{j+1})\rceil}{e}\bigg)^{\lceil 1/(10\alpha_{j+1})\rceil} \cdot \bigg(\frac {200n}{e}\bigg)^{n}.
\end{split}
\end{align*}
  Applying \eqref{Stirling} again, we see that the sum of the last term above over $n$ is convergent so that the contribution of these terms to the last sum of \eqref{upperboundprodE5} is
\begin{align}
\label{upperboundprodE12}
\begin{split}
\ll  \lceil 1/(10\alpha_{j+1})\rceil \bigg(\frac {120\lceil 1/(10\alpha_{j+1})\rceil}{e}\bigg)^{\lceil 1/(10\alpha_{j+1})\rceil}.
\end{split}
\end{align}

  Lastly, for $n< \frac 14 \lceil 1/(10\alpha_{j+1})\rceil$, we apply \eqref{sumpj} and \eqref{Stirling} one more time to see that the expression in \eqref{upperboundprodE9} is
\begin{align*}
%%\label{upperboundprodE13}
\begin{split}
\leq & \frac {5\lceil 1/(10\alpha_{j+1})\rceil/2}{(2n)!} \bigg(\frac {5\lceil 1/(10\alpha_{j+1})\rceil}{4e}\sum_{T^{\alpha_j}<p\le T^{\alpha_{j+1}}}\frac{4}{p}\bigg)^{5\lceil 1/(10\alpha_{j+1})\rceil/4} \\
\leq &  \frac {5\lceil 1/(10\alpha_{j+1})\rceil/2}{(2n)!} \bigg(\frac {200\lceil 1/(10\alpha_{j+1})\rceil}{4e}\bigg)^{5\lceil 1/(10\alpha_{j+1})\rceil/4}.
\end{split}
\end{align*}
  Upon summing over $n$, we see that the contribution of these terms to the last sum of \eqref{upperboundprodE5} is
\begin{align}
\label{upperboundprodE14}
\begin{split}
\ll  \lceil 1/(10\alpha_{j+1})\rceil  \bigg(\frac {200\lceil 1/(10\alpha_{j+1})\rceil}{4e}\bigg)^{5\lceil 1/(10\alpha_{j+1})\rceil/4}.
\end{split}
\end{align}

   We apply the bounds \eqref{upperboundprodE12} and \eqref{upperboundprodE14} in \eqref{upperboundprodE5}  to conclude that
\begin{align*}
%%\label{upperboundprodE15}
\begin{split}
& \E \Big ( G_{j+1,m}(X)^{2\lceil 1/(10\alpha_{j+1})\rceil } \exp \Big ( 2(k-1)\Re {\mathcal P}_{j+1}(X)  \Big )\Big ) \ll  \lceil 1/(10\alpha_{j+1})\rceil  \bigg(\frac {120\lceil 1/(10\alpha_{j+1})\rceil}{e}\bigg)^{5\lceil 1/(10\alpha_{j+1})\rceil/4}.
\end{split}
\end{align*}

   We combine \eqref{upperboundprodE2}, \eqref{upperboundprodE5} and the last estimation above to see that, via using \eqref{wbounds},  the last expression in \eqref{upperboundprodE} is
\begin{align*}
%%\label{upperboundprodE16}
\begin{split}
  \ll & \Big ( \alpha^{3/4}_{j+1}\Big)^{2\lceil 1/(10\alpha_{j+1})\rceil }\lceil 1/(10\alpha_{j+1})\rceil  \bigg(\frac {120\lceil 1/(10\alpha_{j+1})\rceil}{e}\bigg)^{5\lceil 1/(10\alpha_{j+1})\rceil/4} N(T, 2T)   \prod^{{\mathcal J}}_{l=1}
 \prod_{p \in I_l}I_0\big (\frac {2 w_j(p)}{\sqrt{p}}+\frac {2(k-1)}{\sqrt{p}} \big ) \\
\ll & \Big ( \frac 1{\alpha_{j+1}}\Big)^{-\lceil 1/(10\alpha_{j+1})\rceil/8 } N(T, 2T)   \prod^{{\mathcal J}}_{l=1}
 \prod_{p \in I_l}I_0\big (\frac {2k}{\sqrt{p}} \big ).
\end{split}
\end{align*}

  Using further the estimation that $I_0(2x) \leq e^{x^2}$, we deduce from the above that
the last expression in \eqref{upperboundprodE} is
\begin{align}
\label{upperboundprodE17}
\begin{split}
  \ll \Big ( \frac 1{\alpha_{j+1}}\Big)^{-\lceil 1/(10\alpha_{j+1})\rceil/8 } N(T, 2T) \exp \Big (\sum_{p \leq T^{\alpha_{{\mathcal J}}}}\frac {k^2}{p} \Big ) \ll  \Big ( \frac 1{\alpha_{j+1}}\Big)^{-\lceil 1/(10\alpha_{j+1})\rceil/8 }T(\log T)^{k^2+1},
\end{split}
\end{align}
  where the last estimation above follows from \eqref{upperboundprodE17} and \eqref{N2Tbound}.

 As $20/\alpha_{j+1}=1/\alpha_j$, we conclude from \eqref{sumovermj'}, \eqref{S0bound1}, \eqref{upperboundprodE0}, \eqref{upperboundprodE} and \eqref{upperboundprodE17} and that
\begin{align*}
\begin{split}
  \sum_{T<\gamma\leq 2T}|\zeta'(\rho)|^{2} |\mathcal{N}(\rho, k-1)|^2 \ll & \sum_{j=1}^{ \mathcal{J}}  \sum_{\gamma \in S(j)}|\zeta'(\rho)|^{2} |\mathcal{N}(\rho, k-1)|^2+T(\log T)^{k^2+3} \\
\ll & T(\log T)^{k^2+3}\sum_{j=1}^{ \mathcal{J}}\Big ( \frac 1{\alpha_{j+1}}\Big)^{-\lceil 1/(10\alpha_{j+1})\rceil/(10) }.
\end{split}
\end{align*}
   As the sum of the right side expression over $j$ converges, we see that the above estimation implies \eqref{zetaNestmation}
and this completes the proof of Proposition \ref{Prop6}.

\vspace*{.5cm}

\noindent{\bf Acknowledgments.} P. G. is supported in part by NSFC grant 11871082.

\bibliography{biblio}
\bibliographystyle{amsxport}

\vspace*{.5cm}

\end{document}